\numberwithin{equation}{section}
\let\oldnl\nl
\newcommand{\nonl}{\renewcommand{\nl}{\let\nl\oldnl}}
\newtheorem{thm}{{Theorem}}
\newtheorem{prop}{{Proposition}}
\newtheorem{lem}{{Lemma}}
\newtheorem{rem}{{Remark}}
\let\oldproofname=\proofname
\renewcommand{\proofname}{\rm\bf{\oldproofname}}
\newcommand{\R}{\mathbb{R}}
\newcommand*\Bell{\ensuremath{\boldsymbol\ell}}
\DeclareMathOperator{\arctanh}{arctanh}
\title{On Sparse Grid Interpolation for American Option Pricing with Multiple Underlying Assets}
\author{Jiefei Yang \thanks{Department of Mathematics, The University of Hong Kong, Pokfulam Road, Hong Kong. Email: {\tt{jiefeiy@connect.hku.hk}} JY acknowledges support from the University of Hong Kong via the HKU Presidential PhD Scholar Programme (HKU-PS). The authors would like to thank the Isaac Newton Institute for Mathematical Sciences for support and hospitality during the programme Uncertainty Quantification and Stochastic Modelling of Materials when work on this paper was undertaken. This work was supported by EPSRC Grant Number EP/R014604/1.} \quad and \quad Guanglian Li\thanks{Department of Mathematics, The University of Hong Kong, Pokfulam Road, Hong Kong. Email: {\tt{lotusli@maths.hku.hk}} GL acknowledges the support from GRF (project number: 17317122) and Early Career Scheme (Project number: 27301921), RGC, Hong Kong. The authors thanks Prof. Michael Griebel (University of Bonn, Germany) for providing valuable literatures on option pricing and sparse grids.}}
\date{September 15, 2023}
\begin{document}
\maketitle
\begin{abstract}
In this work, we develop a novel efficient quadrature and sparse grid based polynomial interpolation method to price American options with multiple underlying assets. The approach is based on first formulating the pricing of American options using dynamic programming, and then employing static sparse grids to interpolate the continuation value function at each time step. To achieve high efficiency, we first transform the domain from $\mathbb{R}^d$ to $(-1,1)^d$ via a scaled tanh map, and then remove the boundary singularity of the resulting multivariate function over $(-1,1)^d$ by a bubble function and simultaneously, to significantly reduce the number of interpolation points. We rigorously establish that with a proper choice of the bubble function, the resulting function has bounded mixed derivatives up to a certain order, which provides theoretical underpinnings for the use of sparse grids. Numerical experiments for American arithmetic and geometric basket put options with the number of underlying assets up to 16 are presented to validate the effectiveness of the approach.\\
\textbf{Key words}: sparse grids, American option pricing, multiple underlying assets, continuation value function, quadrature

\noindent\textbf{MSC classification:} 65D40, 91G20
\end{abstract}

\section{Introduction}
This paper is concerned with American option pricing with payoffs affected by many underlying instruments, which can be assets such as stocks, bonds, currencies, commodities, and indices (e.g., S\&P 500, NASDAQ 100) \cite[p. 365]{zhang1997exotic}. Classical examples include pricing American max-call and basket options \cite{andersen2004primal, haugh2004pricing, hu2020pricing, kovalov2007pricing, nielsen2008penalty, reisinger2007efficient, scheidegger2021pricing}. In practice, American options can be exercised only at discrete dates. The options that can be exercised only at finite discrete dates are called Bermudan options, named after the geographical feature of Bermudan Islands (i.e., being located between America and Europe while much closer to the American seashore) \cite{guyon2013nonlinear}.  Merton \cite{merton1976option} showed that pricing an American call option on a single asset with no dividend is equivalent to pricing a European option, and obtained an explicit solution to pricing perpetual American put. However, except these two cases no closed-form solution is known to price American options even in the simplest Black Scholes model. Thus it is imperative to develop efficient numerical methods for American option pricing.

Various numerical methods have been proposed for pricing and hedging in the past five decades, using different formulations
of American option pricing, e.g., an optimal stopping problem, a variational inequality, or a free boundary problem
\cite{bensoussan2011applications,jaillet1990variational, McKean1965,van1974optimal}. A finite difference method (FDM) was
proposed to price American options based on variational inequalities \cite{brennan1977valuation}, with its convergence
proved in \cite{jaillet1990variational} by showing that the $\mathcal{C}^1$ regularity of the value function with respect
to the underlying price. The binomial options pricing model (BOPM) based upon optimal time stopping was developed
in \cite{cox1979option}, and its convergence was shown in \cite{amin1994convergence}. When the number $d $ of underlying
assets is smaller than four, one can extend one-dimensional pricing methods using tensor product or additional treatment
to price multi-asset options. For example, Cox-Ross-Rubinstein (CRR) binomial tree model
can be extended to the multinomial option pricing model to price American options with two underlying assets
\cite{boyle1988lattice}. However, dynamic programming (cf. \eqref{eq: DP} below) or variational inequalities
\cite{bensoussan1984theory} are predominant when $d$ is greater than three.
Many numerical schemes have been developed based on the variational inequalities, e.g.,
FDMs \cite{achdou2005computational,brennan1977valuation, forsyth2002quadratic} and finite element methods (FEMs)
\cite{kovalov2007pricing}. However, only the first order convergence rate can be achieved, since the value function has only $\mathcal{C}^1$ regularity with respect to the underlying price (i.e., smooth pasting condition \cite{peskir2006optimal}).

There are mainly two lines of research on American option pricing based on dynamic programming, i.e., simulation
based methods \cite{andersen2004primal, longstaff2001valuing,tsitsiklis2001regression} and quadrature and
interpolation (Q\&I) based methods \cite{quecke2007efficient,sullivan2000valuing}. Simulation based methods are
fast and easy to implement, but their accuracy is hard to justify. One representative method is the least square
Monte Carlo method \cite{longstaff2001valuing}, which employs least square regression and Monte Carlo method to
approximate conditional expectations, cf. \eqref{eq: DP}. Q\&I based methods employ quadrature to approximate
conditional expectations and interpolation to construct function approximators. One can use Gaussian quadrature
or adaptive quadrature to approximate conditional expectations in \eqref{eq: DP} and Chebyshev polynomial
interpolation, spline interpolation or radial basis functions to reconstruct the continuation value or the value
function \cite{quecke2007efficient,sullivan2000valuing}. In \cite{glau2019new}, a dynamic Chebyshev method via
polynomial interpolation of the value functions was developed, allowing the generalized moments evaluation in the offline stage to reduce computational complexity. Although Q\&I based methods are efficient in low-dimensional settings, the extensions to high-dimensional cases are highly nontrivial, and there are several outstanding challenges, e.g., curse of dimensionality, unboundedness of the domain and the absence of natural boundary conditions. For example, if the domain is truncated and artificial boundary conditions are imposed, then one is actually pricing an American barrier option with rebate instead of the American option itself. Accurate boundary conditions can be obtained by pricing a $(d-1)$-dimensional problem \cite{nielsen2008penalty}, which, however, is still computationally challenging.

The sparse grids technique has been widely used in option pricing with multiple underlying assets, due to its capability to approximate high-dimensional functions with bounded mixed derivatives, for which the computational complexity for a given accuracy does not grow exponentially with respect to $d$ \cite{bungartz2004sparse}. It has been applied to price multi-asset European and path-dependent Asian options, by formulating them as a high-dimensional integration problem \cite{bayer2021numerical,bungartz2003multivariate, holtz2010sparse}, and also combined with the FDMs \cite{reisinger2007efficient} and FEMs \cite{bungartz2012option} to price options with $d\leq 5$. In the context of Q\&I methods, adaptive sparse grids interpolation with local linear hierarchical basis have been used to approximate value functions \cite{scheidegger2021pricing}.

In this work, we propose a novel numerical approach to price American options under multiple underlying assets, which is summarized in Algorithm \ref{alg: SGSG}. It crucially draws on the $\mathcal{C}^\infty$ regularity of the continuation value function, cf. \eqref{eq:continuation}, and uses sparse grid Chebyshev polynomial interpolation to alleviate the curse of dimensionality. This is achieved in several crucial steps. First, we transform the unbounded domain $\mathbb{R}^d$ into a bounded one, which eliminates the need of imposing artificial boundary conditions. Second, to further improve the computational efficiency, using a suitable bubble function, we obtain a function that can be continuously extended to the boundary with vanishing boundary values and with bounded mixed derivatives up to certain orders, which is rigorously justified in Theorem \ref{thm: bounded-mixed}. This construction enables the use of the standard sparse grid technique with much fewer sparse grids (without adaptivity), and moreover, the interpolation functions fulfill the requisite regularity conditions, thereby admitting theoretical convergence guarantees. The distinct features of the proposed method include using static sparse grids at all time steps and allowing deriving the value function on the whole domain $\mathbb{R}^d$, and thus can also be used to estimate important parameters, e.g., hedge ratio.

Extensive numerical experiments demonstrate that Algorithm \ref{alg: SGSG} can
break the curse of dimensionality in the sense that high accuracy is achieved with the associated computational complexity being almost independent of the dimension. Furthermore, our experiments can significantly enrich feasible dimensionality, e.g., pricing American arithmetic basket put options up to $d=12$ (versus $d=6$ from the literature), and pricing American geometric basket put options up to $d=16$. In both cases, we consistently observe high accuracy of the approximation, with almost no influence from the dimension. Further experimental evaluations show that the method is robust with respect to the choice of various algorithmic parameters and the specifics of the quadrature scheme. In sum, our proposed algorithm significantly improves over the state-of-the-art pricing schemes for American options with multiple underlying assets and in the meanwhile has rigorous theoretical guarantee.

The remainder of our paper is structured as follows. In Section \ref{sec:prelim}, we derive the continuation value function $f_k(\mathbf{x})$ for an American basket put option with underlying assets following the correlated geometric Brownian motion under the risk-neutral probability. In Section \ref{sec:method}, we develop the novel method. In Section \ref{sec:analysis}, we establish the smoothness of the interpolation function in the space of functions with bounded mixed derivatives. In Section \ref{sec:numerical}, we present extensive numerical tests for American basket options with up to 16 underlying assets. Finally, we conclude with future work in Section \ref{sec:conclusion}.

\section{The mathematical model}\label{sec:prelim}

Martinagle pricing theory gives the fair price of an American option as the solution to the optimal stopping problem in the risk-neutral probability space $(\Omega, \mathcal{F}, (\mathcal{F}_t)_{0\le t\le T}, \mathbb{Q})$:
\begin{equation} \label{eq: OSP}
    V(t) = \sup_{\tau_t \in [t, T]} \mathbb{E}[e^{-r(\tau_t - t)} g(\mathbf{S}(\tau_t)) | \mathcal{F}_t],
\end{equation}
where $\tau_t$ is a $\mathcal{F}_t$-stopping time, $T$ is the expiration date, $(\mathbf{S}(t))_{0\le t\le T}$, is a collection of $d$-dimensional price processes, and $g(\cdot)$ is the payoff function depending on the type of the option. The payoffs of put and call options take the following form $g(\mathbf{S}) = \max( \kappa - \Psi(\mathbf{S}) , 0)$ and $g(\mathbf{S}) = \max( \Psi(\mathbf{S}) - \kappa , 0)$,
respectively, where $\Psi: \mathbb{R}_{+}^d\to\mathbb{R}_{+}$, and $\kappa$ is the strike price.

Now we describe a detailed mathematical model for pricing an American put option on $d$ underlying assets with a strike price
$\kappa$ and a maturity date  $T$, whose numerical approximation is the main objective of this work. One classical
high-dimensional example is pricing American basket options. Let $g:\mathbb{R}_{+}^d\to\mathbb{R}_{+} $ be its payoff function,
and assume that the prices of the underlying assets $\mathbf{S}(t) = [S^1(t), \dots, S^d(t)]^\top$ follow the correlated geometric Brownian motions
\begin{equation}\label{eq:model}
    \mathrm{d}S^i(t) = (r - \delta_i) S^i(t)\,\mathrm{d}t + \sigma_i S^i(t)\,\mathrm{d}\tilde{W}^i(t) \quad \text{ with } S^i(0) = S^i_0, \quad i = 1,2,\dots,d,
\end{equation}
where $\tilde{W}^i(t)$ are correlated $\mathbb{Q}$-Brownian motions with  $\mathbb{E}[\mathrm{d}\tilde{W}^i(t) \mathrm{d}\tilde{W}^j(t)] = \rho_{ij} \,\mathrm{d}t$,
$\rho_{ii} = 1$ for $i,j = 1,2,\dots, d$, and $r$, $\delta_i$ and $\sigma_i$ are the riskless interest rate, dividend yields,
and volatility parameters, respectively.
The payoffs of an arithmetic and a geometric basket put are respectively given by
\begin{equation*}
    g(\mathbf{S}) = \max\left(\kappa - \frac{1}{d}\sum_{i=1}^d S^i, 0\right)\quad \mbox{and}\quad     g(\mathbf{S}) = \max\left(\kappa - \left(\prod_{i=1}^d S^i\right)^{1/d}, 0\right).
\end{equation*}

In practice, the $\mathcal{F}_0$-stopping time $\tau_0$ is assumed to be taken in a set of discrete time steps, $\mathcal{T}_{0,T} := \{t_k = k\Delta t: k=0,1,\ldots,K\}$, with $\Delta t=  T/K $. This leads to the pricing of a $K$-times exercisable Bermudan option that satisfies the following dynamic programming problem
\begin{equation} \label{eq: DP}
    \begin{split}
        & \mathcal{V}_K(\mathbf{s}) = g(\mathbf{s}), \\
        & \mathcal{V}_k(\mathbf{s}) = \max\big( g(\mathbf{s}), \mathbb{E}[e^{-r\Delta t} \mathcal{V}_{k+1}(\mathbf{S}_{k+1}) | \mathbf{S}_k = \mathbf{s} ] \big)\quad\text{ for } k = K-1:-1:0,
    \end{split}
\end{equation}
where $\mathcal{V}_k$ is called the value function at time $t_k$ and $\mathbf{S}_k := \mathbf{S}(t_k)$ is the discretized price processes. Throughout, for a stochastic process $\mathbf{X}(t)$, we write $\mathbf{X}_k := \mathbf{X}(t_k)$, $k=0,1,\dots, K$. Note that by the Markov property of It\^{o} process, we can substitute the conditional expectation conditioning on $\mathcal{F}_{t_k}$ to $\mathbf{S}_k$. The conditional expectation as a function of prices is called the continuation value function, i.e.,
\begin{equation}\label{eq:continuation}
    C_k(\mathbf{s}) = \mathbb{E}[e^{-r\Delta t} \mathcal{V}_{k+1}(\mathbf{S}(t_{k+1})) | \mathbf{S}(t_k) = \mathbf{s} ]\quad \text{ for } k = 0,1,\dots, K-1.
\end{equation}
Below we recast problem \eqref{eq: DP} in terms of the continuation value function
\begin{equation} \label{eq: DPC}
    \begin{aligned}
         C_{K-1}(\mathbf{s}) &= \mathbb{E}\left[e^{-r\Delta t} g(\mathbf{S}_K) | \mathbf{S}_{K-1} = \mathbf{s}\right],&& \\
         C_k(\mathbf{s}) &= \mathbb{E}\left[e^{-r\Delta t} \max \left( g(\mathbf{S}_{k+1}), C_{k+1}(\mathbf{S}_{k+1}) \right) | \mathbf{S}_k = \mathbf{s} \right] &&\text{ for } k = K-2:-1:0.
    \end{aligned}
\end{equation}
Given an approximation to $C_0(\mathbf{s})$, the price of Bermudan option can be obtained by
\begin{equation}\label{eq:goal}
    \mathcal{V}_0(\mathbf{S}_0) = \max( g(\mathbf{S}_0), C_0(\mathbf{S}_0)).
\end{equation}
The reformulation in terms of $C_k$ is crucial to the development of the numerical scheme.

Next we introduce the rotated log-price, which has independent components with Gaussian densities. We
denote the correlation matrix as $P = (\rho_{ij})_{d\times d}$, the volatility matrix $\Sigma$ as a diagonal matrix with
volatility $\sigma_i$ on the diagonal, and write the dividend yields as a vector $\bm{\delta} = [\delta_1, \dots, \delta_d]^\top$.
Then the log-price $\mathbf{X}(t)$ with each component defined by $X^i(t):= \ln(S^i(t)/S^i_0)$ follows a multivariate Gaussian distribution
\[ \mathbf{X}(t)\sim
\mathcal{N}\left(\left(r - \bm{\delta} -
\frac{1}{2}\Sigma^2\mathbf{1}\right)t, \Sigma P \Sigma^\top t\right),
\]
with $\mathbf{1} := [1,1,\dots,1]^\top\in\mathbb{R}^d$. The covariance matrix $\Sigma P \Sigma^\top$ admits the spectral decomposition $\Sigma P \Sigma^\top=Q^{\top}\Lambda Q$. Then the rotated log-price $\Tilde{\mathbf{X}}(t) := Q^\top \mathbf{X}(t)$ follows an independent Gaussian distribution
\[\Tilde{\mathbf{X}}(t)\sim
\mathcal{N}\left(Q^\top \left(r - \bm{\delta} - \frac{1}{2}\Sigma^2\mathbf{1}\right)t, \Lambda t\right).
\]
Therefore, to eliminate the correlation, we introduce the transformation
\begin{align*}
\Tilde{\mathbf{X}}(t) := Q^\top \ln(\mathbf{S}(t)./\mathbf{S}_0),
\end{align*}
and denote the inverse transformation by
\begin{align*}
    \phi(\Tilde{\mathbf{X}}(t)): = \mathbf{S}(t) = \mathbf{S}_0.*\exp(Q\Tilde{\mathbf{X}}(t)),
\end{align*}
where $./$ and $.*$ represent component-wise division and multiplication.

Finally, by defining $f_k(\mathbf{x}) := C_k(\phi(\mathbf{x}))$ as the continuation value function with respect to the rotated log-price for $\mathbf{x}\in \R^d$, we obtain the following dynamic programming procedure
\begin{equation} \label{eq: DP rotated log-price}
\begin{aligned}
f_{K-1}(\mathbf{x}) &= \mathbb{E}\left[e^{-r\Delta t}g( \phi(\Tilde{\mathbf{X}}_K)) \Big| \Tilde{\mathbf{X}}_{K-1} = \mathbf{x}\right],&&\\
 f_k(\mathbf{x}) &= \mathbb{E}\left[ e^{-r\Delta t}\max \left( g( \phi(\Tilde{\mathbf{X}}_{k+1})), f_{k+1}(\Tilde{\mathbf{X}}_{k+1})  \right) \Big| \Tilde{\mathbf{X}}_k = \mathbf{x} \right]&&  \text{ for } k = K-2:-1:0.
\end{aligned}
\end{equation}
The main objective of this work is to develop an efficient numerical method for solving problem \eqref{eq: DP rotated log-price}
with a large number of dimension $d$. The detailed construction is given in Section \ref{sec:method}.

\section{Methodologies}\label{sec:method}
In this section, we systematically develop a novel algorithm, based on quadrature and sparse grids polynomial interpolation (SGPI) \cite{barthelmann2000high} to solve problem \eqref{eq: DP rotated log-price} so that highly accurate results can be obtained for moderately large dimensions. This is achieved as follows.
First, we propose a mapping  $\psi$ \eqref{eq: mapping for unbounded} that transforms the domain from $\R^d$ to $\Omega:=(-1,1)^d$, and obtain problem
\eqref{eq: DP bounded variable} with the unknown function $F_{k}$ defined over the hypercube $\Omega$ in Section \ref{subsec:unbounded-bounded}. The mapping $\psi$ enables utilizing identical sparse grids for all time stepping $k = K-1:-1:0$, which greatly facilitates the computation, and
moreover, it avoids domain truncation and artificial boundary data when applying SGPI, which eliminates extra approximation errors.
However, the partial derivatives of $F_{k}$ may have boundary singularities, leading to low-efficiency of SGPI. To resolve this issue, we multiply $F_k$ with a bubble function \eqref{eq: bubble function}, and derive problem \eqref{eq: DP bubble} with unknown functions $u_k$ defined over the hypercube $\Omega$. Second, we present the SGPI of the unknown $u_k$ in Section \ref{subsec:sg}. Third and last, we provide several candidates for quadrature rules and summarize the algorithm in Section \ref{subsec:algorithm}, and analyze its complexity.

\subsection{Mapping for unbounded domains and bubble functions}\label{subsec:unbounded-bounded}
A direct application of the SGPI to problem \eqref{eq: DP rotated log-price} is generally involved since the problem is formulated on $\mathbb{R}^d$. For $d=1$, quadrature and interpolation-based schemes can be applied to problem \eqref{eq: DP rotated log-price} with domain truncation and suitable boundary conditions, e.g., payoff function. However, for $d\ge 2$, the exact boundary conditions of the truncated domain requires solving $(d-1)$-dimensional American option pricing problems \cite{nielsen2008penalty}. Therefore, the unboundedness of the domain $\mathbb{R}^d$ and the absence of natural boundary conditions pose great challenges to develop direct yet efficient interpolation for the continuation value function $f_k$, and there is an imperative need to develop a new approach to overcome the challenges.

Inspired by spectral methods on unbounded domains \cite{boyd2001chebyshev},
we propose the use of the logarithmic transformation $\psi: \R^d \to \Omega := (-1,1)^d$, defined by
\begin{equation} \label{eq: mapping for unbounded}
    \left\{\begin{aligned}
        & \mathbf{Z} = \psi(\Tilde{\mathbf{X}}) \text{ with each component } Z^i := \tanh(L \Tilde{X}^i) \in (-1,1), \\
        & \Tilde{\mathbf{X}} = \psi^{-1}(\mathbf{Z}) \text{ with each component } \Tilde{X}^i := L^{-1} \arctanh(Z^i)\in \mathbb{R},
    \end{aligned}\right.
\end{equation}
where $L>0$ is a scale parameter controlling the slope of the mapping. The logarithmic mapping $\psi$ is employed since the transformed points decay exponentially as they tend to infinity. Asymptotically the exponential decay rate matches that of the Gaussian distribution of the rotated log-price $\Tilde{\mathbf{X}}(t)$. Then by It\^{o}'s lemma, the new stochastic process $\mathbf{Z}(t) = [Z^1(t), \dots, Z^d(t)]^\top$ satisfies the stochastic differential equations
\begin{equation} \label{eq: SDE of Z}
    \mathrm{d}Z^i(t) = L\left( 1-(Z^i(t))^2 \right) \left( (\mu_i - \lambda_i L Z^i(t))\,\mathrm{d}t + \sqrt{\lambda_i} \,\mathrm{d}W^i(t) \right) \quad \text{ for } i=1,\dots,d,
\end{equation}
where $\bm{\mu} = [\mu_1,\dots,\mu_d]^\top = Q^\top \left(r - \bm{\delta} - \frac{1}{2}\Sigma^2\mathbf{1}\right)$, $\lambda_i$ are diagonal elements of $\Lambda$, and $W^i(t)$ are independent standard Brownian motions. Note that the drift and diffusion terms in \eqref{eq: SDE of Z} vanish on the boundary $\partial \Omega$. Thus, \eqref{eq: SDE of Z} fulfills the reversion condition \cite{zhu2003multi}, which implies $\mathbf{Z}(t)\in \Omega$ for $t>s$ provided $\mathbf{Z}(s)\in \Omega$.

Then we apply the mapping $\psi$ to the dynamic programming procedure \eqref{eq: DP rotated log-price}.
Let $F_k(\mathbf{z}) := f_k(\psi^{-1}(\mathbf{z})) = C_k\left(\phi (\psi^{-1}(\mathbf{z}))\right)$ be the continuation value function
of the bounded variable $\mathbf{z}$. Then problem \eqref{eq: DP rotated log-price} can be rewritten as,
for any $\mathbf{z}\in \Omega$,
\begin{equation} \label{eq: DP bounded variable}
    \begin{aligned}
         F_{K-1}(\mathbf{z}) &= \mathbb{E}\left[e^{-r\Delta t} g\left( \phi ( \psi^{-1}(\mathbf{Z}_K))\right) \big| \mathbf{Z}_{K-1} = \mathbf{z}\right],&& \\
        F_k(\mathbf{z}) &= \mathbb{E}\left[ e^{-r\Delta t}\max \left(g\left(\phi (\psi^{-1}(\mathbf{Z}_{k+1}))\right), F_{k+1}(\mathbf{Z}_{k+1})  \right) \big| \mathbf{Z}_k = \mathbf{z} \right]   &&\text{ for } k = K-2:-1:0.
    \end{aligned}
\end{equation}
Below we denote by $H(\cdot) := g\left(\phi(\psi^{-1}(\cdot))\right)$ the payoff function with respect to the bounded variable $\mathbf{z}$.

Note that problem \eqref{eq: DP bounded variable} is posed on the bounded domain $\Omega := (-1,1)^d$, $d\ge 1$,
which however remains challenging to approximate. First, the function $F_k: \Omega
\to \R$ may have singularities on the boundary $\partial\Omega$ due to the use of the mapping $\psi$. Second,
the Dirichlet boundary condition of $F_k$ is not identically zero, which is undesirable for controlling the
computational complexity of the algorithm, especially in high dimensions. Thus, we employ a bubble function
of the form
\begin{equation} \label{eq: bubble function}
    b(\mathbf{z}) = \prod_{i=1}^d (1-z_i^2)^{\beta}, \quad \mathbf{z}\in \overline{\Omega}:=[-1,1]^d,
\end{equation}
where the parameter $\beta>0$ controls the shape of $b(\mathbf{z})$. Note that $b(\mathbf{z})>0$ for $\mathbf{z}\in \Omega$ and $b(\mathbf{z})=0$ on $\partial \Omega$. Let
\begin{align*}
     u_k(\mathbf{z}) := F_k(\mathbf{z})b(\mathbf{z}),\quad k = 0,\ldots,K-1.
\end{align*}
Then the dynamic programming problem \eqref{eq: DP bounded variable} is equivalent to
\begin{equation} \label{eq: DP bubble}
    \begin{aligned}
         u_{K-1}(\mathbf{z}) &=  \mathbb{E}\left[e^{-r\Delta t}H(\mathbf{Z}_K) | \mathbf{Z}_{K-1} = \mathbf{z}\right] b(\mathbf{z}),&& \\
         u_k(\mathbf{z}) &= \mathbb{E}\left[ e^{-r\Delta t}\max \left( H(\mathbf{Z}_{k+1}), \frac{u_{k+1}(\mathbf{Z}_{k+1})}{b(\mathbf{Z}_{k+1})}  \right) \bigg| \mathbf{Z}_k = \mathbf{z} \right]b(\mathbf{z}) &&\text{ for } k = K-2:-1:0.
    \end{aligned}
\end{equation}
\begin{rem} \label{rem: 1}
The term $\frac{u_{k+1}(\mathbf{Z}_{k+1})}{b(\mathbf{Z}_{k+1})}$ appearing in \eqref{eq: DP bubble} is well-defined for
$\mathbf{Z}_{k+1} \in \Omega$. Nevertheless, since the bubble function $b(\mathbf{z})$ approaches zero as
$\mathbf{z}\to \partial\Omega$, in the numerical experiments, one should guarantee that $b(\cdot)$ evaluated on the computational grid will be greater than the machine epsilon, e.g., $\varepsilon = 2.2204\times 10^{-16}$ in MATLAB.
This will be established in Proposition \ref{prop:bubble}.
\end{rem}

\subsection{Approximation by sparse grids polynomial interpolation (SGPI)}\label{subsec:sg}
Next, we apply SGPI \cite{barthelmann2000high} to approximate the zero extension $\overline{u}_k:
\overline{\Omega} \to \R$ of $u_k$ iteratively backward in time. By choosing suitable bubble
functions $b(\mathbf{z})$, we shall prove in Theorem \ref{thm: bounded-mixed} below
the smoothness property of $\overline{u}_k$ up to order $r$. Thus the SGPI enjoys a geometrical
convergence rate with respect to the number of interpolation points $\Tilde{N}$, which depends only
on the dimension $d$ in a logarithm term, i.e. $\mathcal{O}(\Tilde{N}^{-r} (\log \Tilde{N})^{(r+1)
(d-1)})$ as stated in Proposition \ref{prop: error of sparse-interp}. In addition, thanks to the zero boundary condition
of $\overline{u}_k$, the required number of interpolation data is greatly reduced when compared with the full grids.

SGPI \cite{barthelmann2000high} (also called Smolyak approximation) is a powerful tool for constructing function approximations over a high-dimensional hypercube. Consider a function $f: \overline{\Omega} \to \R$. For $d = 1$, we denote by $X^\ell = \{x_1^\ell, \dots, x_{N_\ell}^\ell\}$ the set of nested Chebyshev-Gauss-Lobatto (CGL) points, with the nodes $x_j^\ell$ given by
\begin{equation*}
    x_j^\ell = \begin{cases}
        0 &\quad\text{ for }j=1 \text{ if }\ell = 1, \\
        \cos(\frac{(j-1)\pi}{N_\ell - 1}) &\quad \text{ for }j=1,\dots, N_\ell \text{ if }\ell \ge 2.
    \end{cases}
\end{equation*}
The cardinality of the set $X^\ell$ is
\begin{equation*}
    N_\ell = \begin{cases}
        1 &\quad \text{ if }\ell = 1, \\
        2^{\ell-1}+1 &\quad \text{ if }\ell\ge 2.
    \end{cases}
\end{equation*}
The polynomial interpolation $U^\ell f$ of $f$ over the set $X^\ell$ is defined as follows.
For $\ell = 1$, consider the midpoint rule, i.e., $(U^1 f)(x) = f(0)$. For $\ell\geq 2$, $U^\ell f$ is given by
\begin{equation*}
    (U^\ell f)(x) = \sum_{j=1}^{N_\ell} f(x_j^\ell) L_j^\ell(x), \quad \mbox{with } L_j^\ell(x) = \prod_{k=1, k\ne j}^{N_\ell} \frac{x-x_k}{x_j - x_k},
\end{equation*}
where $L_j^\ell$ are Lagrange basis polynomials.
Then we define the difference operator
\begin{equation*}
    \Delta^\ell f = (U^\ell - U^{\ell-1})f \quad \text{ with } U^0 f = 0.
\end{equation*}

For $d>1$, Smolyak's formula approximates a function $f:\overline{\Omega}\to\mathbb{R}$ by the interpolation operator
\begin{equation*}
    A(q,d) = \sum_{\Bell \in I(q,d)} \Delta^{\ell_1} \otimes \dots \otimes \Delta^{\ell_d},
\end{equation*}
with the index set $I(q,d):= \{\Bell \in \mathbb{N}^d: |\Bell| \le q\}$ and  $|\Bell| = \ell_1 + \dots + \ell_d$ \cite{barthelmann2000high}.
Equivalently, the linear operator $A(q,d)$ can be represented as \cite[Lemma 1]{wasilkowski1995explicit}
\begin{equation} \label{eq: SG interp}
    A(q,d) = \sum_{\Bell \in P(q,d)} (-1)^{q-|\Bell|}\binom{d-1}{q-|\Bell|} U^{\ell_1} \otimes \dots \otimes U^{\ell_d},
\end{equation}
with the index set $P(q,d):= \{\Bell \in \mathbb{N}^d: q-d+1 \le |\Bell| \le q\}$, where
the tensor product of the univariate interpolation operators is defined by
\begin{equation*}
    (U^{\ell_1} \otimes \dots \otimes U^{\ell_d})(f) = \sum_{j_1=1}^{N_{\ell_1}} \dots \sum_{j_d=1}^{N_{\ell_d}}f(x_{j_1}^{\ell_1}, \dots, x_{j_d}^{\ell_d}) (L_{j_1}^{\ell_1} \otimes \dots \otimes  L_{j_d}^{\ell_d}),
\end{equation*}
i.e., multivariate Lagrange interpolation. With the set $X^{\ell_i}$ (i.e., one-dimensional nested CGL points), the
formula \eqref{eq: SG interp} indicates that computing $A(q,d)(f)$ only requires function evaluations on the sparse grids
\begin{equation*}
    H(q,d) = \bigcup_{\Bell \in P(q,d)} X^{\ell_1} \times \dots \times X^{\ell_d}.
\end{equation*}
We denote the cardinality of $H(q,d)$ by $\Tilde{N}_{CGL}(q,d)$. Usually, the interpolation level $L_I\in \mathbb{N}_0$ is defined by
$$ L_I := q-d. $$
Then for fixed $L_I$ and $d\to \infty$, the following asymptotic estimate of $\Tilde{N}_{CGL}(L_I+d,d)$ holds \cite{novak1999simple}
\begin{equation} \label{eq: cardinality of N_CGL}
    \Tilde{N}_{CGL}(L_I+d,d) \approx \frac{2^{L_I}}{L_I!}d^{L_I}. 
\end{equation}

The sparse grid has much fewer grid points than the full grid generated by the tensor product. Furthermore, in high-dimensional hypercube, a significant number of sparse grids lie on the boundary. We will compare the number of CGL sparse grids
$\Tilde{N}_{CGL}(L_I+d,d)$ and the number of inner sparse grids $N$ in Section \ref{subsec:algorithm}.
In Remark \ref{rem: 1}, we require that $b(\cdot)$ evaluated on the computational grid be greater than the machine epsilon. Note that for all inner sparse grids $\{\mathbf{z}^n\}_{n=1}^N$ of the interpolation level $L_I$, each coordinate $z^n_j$ satisfies
\begin{equation} \label{eq: estimate interpolation points}
    -\cos\left(\frac{\pi}{2^{\ell_j-1}}\right) \le z^n_j \le \cos\left(\frac{\pi}{2^{\ell_j-1}}\right),
\end{equation}
where $\ell_1 + \dots + \ell_d \le L_I+d$.

\subsection{Numerical algorithm}\label{subsec:algorithm}
We interpolate the function $\overline{u}_k: \overline{\Omega} \to \R$ on CGL sparse grids in the dynamic programming \eqref{eq: DP bubble}, where the interpolation data can be formulated as high-dimensional integrals. Since $\overline{u}_k(\mathbf{z}) = 0$ for $\mathbf{z}\in \partial \Omega$, only function evaluations on the inner sparse grids are required. This greatly reduces the computational complexity, especially for large $d$. Indeed, since $\Tilde{\mathbf{X}}_{k+1} = \Tilde{\mathbf{X}}_k + \mathbf{Y}$ with $\mathbf{Y} \sim \mathcal{N}(Q^\top (r - \bm{\delta} - \frac{1}{2}\Sigma^2\mathbf{1})\Delta t, \Lambda \Delta t)$,
\begin{equation} \label{eq: pts next step}
    \mathbf{Z}_{k+1} = \psi (\psi^{-1}(\mathbf{Z}_k) + \mathbf{Y}).
\end{equation}
For a fixed interpolation knot $\mathbf{z} \in \Omega$, we denote
\begin{equation} \label{eq: def of integrand}
    v_{k+1}^{\mathbf{z}}(\mathbf{Y}) = \max \left( H\big(\psi (\psi^{-1}(\mathbf{z}) + \mathbf{Y}) \big), \frac{u_{k+1}}{b} \big( \psi (\psi^{-1}(\mathbf{z}) + \mathbf{Y}) \big) \right),
\end{equation}
and the probability density of $\mathbf{Y}$ as
$\rho(\mathbf{y}) = \prod_{i=1}^d \rho_i(y_i)$.
Following \eqref{eq: DP bubble}, the interpolation   data $u_k(\mathbf{z})$ is given by
\begin{equation*}
    \begin{split}
        u_k(\mathbf{z}) = e^{-r\Delta t}\mathbb{E}[v_{k+1}^{\mathbf{z}}(\mathbf{Y})] b(\mathbf{z}) = e^{-r\Delta t}b(\mathbf{z}) \int_{\R^d} v_{k+1}^{\mathbf{z}}(\mathbf{y}) \rho(\mathbf{y}) \,\mathrm{d}\mathbf{y},
    \end{split}
\end{equation*}
where the last integral can be computed by any high-dimensional quadrature methods, e.g., Monte Carlo (MC), quasi-Monte Carlo (QMC) method, or sparse grid quadrature.
\begin{enumerate}
    \item The \textbf{Monte Carlo method} approximates the integral by averaging random samples of the integrand
    \begin{equation} \label{eq: MC}
        \int_{\R^d} v_{k+1}^{\mathbf{z}}(\mathbf{y}) \rho(\mathbf{y}) \,\mathrm{d}\mathbf{y} \approx \frac{1}{M}\sum_{m=1}^M v_{k+1}^{\mathbf{z}}(\mathbf{y}^m),
    \end{equation}
    where $\{\mathbf{y}^m\}_{m=1}^M$ are independent and identically distributed (i.i.d.) random samples drawn from the distribution $\rho(\mathbf{y})$.

    \item The \textbf{Quasi-Monte Carlo method} takes the same form as \eqref{eq: MC}, but $\{\mathbf{y}^m\}_{m=1}^M$ are the transformation of QMC points. By changing variables $\mathbf{x} = \Phi(\mathbf{y})$ with $\Phi$ being the cumulative density function (CDF) of $\mathbf{Y}$, we have
    \begin{align}
        \int_{\R^d} v_{k+1}^{\mathbf{z}}(\mathbf{y}) \rho(\mathbf{y}) \,\mathrm{d}\mathbf{y} &= \int_{[0,1]^d} v_{k+1}^{\mathbf{z}}\left(\Phi^{-1}(\mathbf{x}) \right)\,\mathrm{d}\mathbf{x}\nonumber \\
        &\approx \frac{1}{M}\sum_{m=1}^M v_{k+1}^{\mathbf{z}}\left(\Phi^{-1}(\mathbf{x}^m) \right)= \frac{1}{M}\sum_{m=1}^M v_{k+1}^{\mathbf{z}}(\mathbf{y}^m), \label{eq: QMC}
    \end{align}
    where $\{\mathbf{x}^m\}_{m=1}^M$ are QMC points taken from a low-discrepancy sequence, e.g., Sobol sequence and sequences generated by the lattice rule. Note that other transformations $\Phi$ are also available for designing QMC approximation \cite{kuo2016practical}.

    \item The \textbf{Sparse grid quadrature} approximates the integral based on a combination of tensor products of univariate quadrature rule. For the integration with Gaussian measure, several types of sparse grids are available, including Gauss-Hermite, Genz-Keister, and weighted Leja points \cite{piazzola.tamellini:SGK}. Let $(\omega_m,\mathbf{y}^m)_{m=1}^M$ be quadrature weights and points associated with an anisotropic Gaussian distribution of $\mathbf{Y}$. Then the integral is computed by
    \begin{equation} \label{eq: SG quadrature}
        \int_{\R^d} v_{k+1}^{\mathbf{z}}(\mathbf{y}) \rho(\mathbf{y}) \,\mathrm{d}\mathbf{y} \approx \sum_{m=1}^M \omega_m v_{k+1}^{\mathbf{z}}(\mathbf{y}^m).
    \end{equation}
\end{enumerate}

By employing the transformation between the asset price $\mathbf{S}$ and the bounded variable $\mathbf{z}$, the sparse grid interpolation points and the quadrature points are shown in Fig. \ref{fig: interp_quad_pts}.

\begin{figure}[H]
    \centering
    \begin{subfigure}[b]{0.4\textwidth}
        \centering
        \includegraphics[width = .95\textwidth]{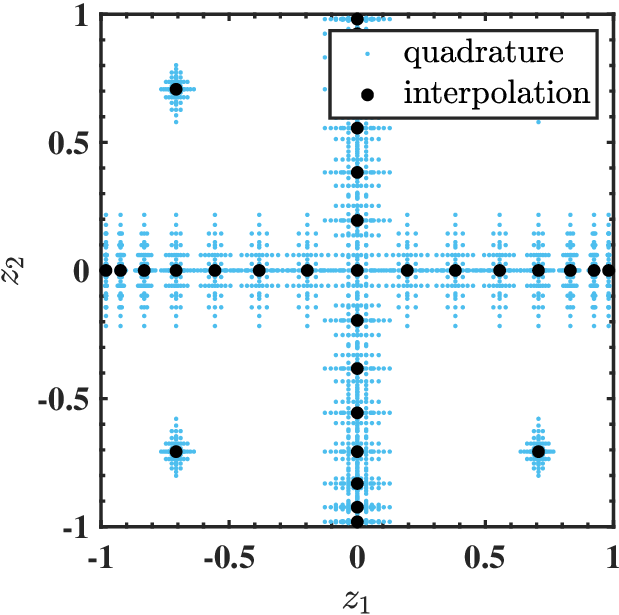}
        \caption{}
        \label{fig: interp_quad_pts_a}
    \end{subfigure}
    ~
    \begin{subfigure}[b]{0.4\textwidth}
        \centering
        \includegraphics[width = \textwidth]{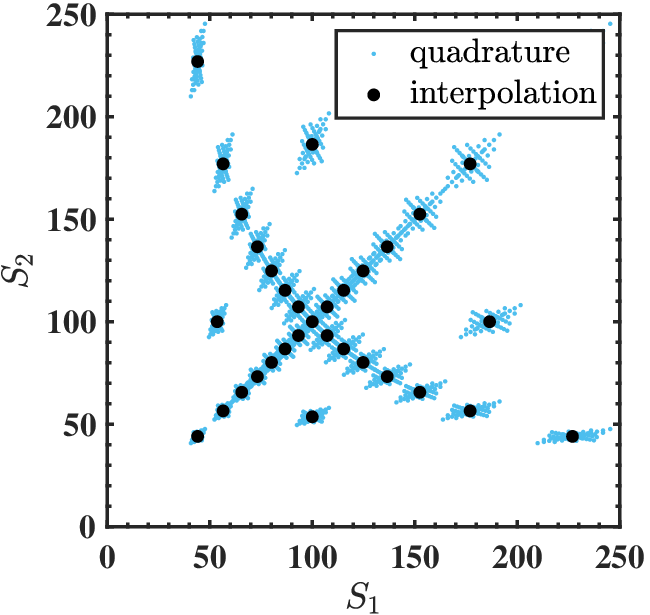}
        \caption{}
        \label{fig: interp_quad_pts_b}
    \end{subfigure}
    \caption{A schematic illustration of interpolation and quadrature points in $d = 2$; (a) in the domain of the mapped bounded variable $\mathbf{z}$; (b) in the domain of the price of underlying assets $\mathbf{S}$.}
    \label{fig: interp_quad_pts}
\end{figure}

Now we can describe the procedure of iteratively interpolating the function $\overline{u}_k$ in Algorithm \ref{alg: SGSG}.

\begin{algorithm}[hbt]
    \caption{Pricing American option with $d$ assets} \label{alg: SGSG}
    \KwData{Market parameters: $ \mathbf{S}_0, r, \delta_i, \sigma_i, P $\;
    \hspace{1cm} Option parameters: $\kappa, T$, option type\;
    \hspace{1cm} Transform and discretization parameters: $L,K,L_I$.
    }
    \KwResult{Option price $V_0$}
    Generate CGL sparse grids in $\overline{\Omega}$ \;
    Find grids in $\Omega$, which are denoted by $\{\mathbf{z}^n\}_{n=1}^N$ \;
    Generate quadrature points $\{\mathbf{y}^m\}_{m=1}^M$ and weights $\{w_m\}_{m=1}^M$ with Gaussian density $ \mathcal{N}(Q^\top (r - \bm{\delta} - \frac{1}{2}\Sigma^2\mathbf{1})\Delta t, \Lambda \Delta t)$ \;
    \nonl \textbf{Dynamic programming:} \\
    Compute the payoff $H_{n,m} = H(\psi(\psi^{-1}(\mathbf{z}^n) + \mathbf{y}^m))$ \;
    Set the terminal value as $V_{n,m} = H_{n,m}$ \;
    \For{$k = K-1:-1:0$}{
       $u(\mathbf{z}^n) = \alpha \sum_{m=1}^{M} w_m V_{n,m}b(\mathbf{z}^n)$ where $\alpha = e^{-r\Delta t}$ is the discount factor \;
       \If{$k==0$}{
       $u_0 = A(L_{I}+d,d)(u)(\mathbf{0})$, break \;
       }
       $u_{n,m} = A(L_I+d,d)(u)\left(\psi(\psi^{-1}(\mathbf{z}^n) + \mathbf{y}^m)\right)$  \Comment*[r]{Compute in parallel}
       Update $V_{n,m} = \max\big( H_{n,m}, u_{n,m}/b(\psi(\psi^{-1}(\mathbf{z}^n) + \mathbf{y}^m)) \big)$\;
    }
    $V_0 = \max\left( H(\mathbf{0}), \frac{u_0}{b(\mathbf{0})}\right)$.
\end{algorithm}

Followed by the Remark \ref{rem: 1}, the next result gives a sufficient condition on the well-definedness of the algorithm.
\begin{prop}\label{prop:bubble}
Let $\varepsilon$ be the machine epsilon. Assume that each coordinate $y_j^m$ of the sampling or quadrature points $\{\mathbf{y}^m\}_{m=1}^M$ of the random variable $\mathbf{Y} \sim \mathcal{N}(Q^\top (r - \bm{\delta} - \frac{1}{2}\Sigma^2\mathbf{1})\Delta t, \Lambda \Delta t)$ satisfies
\begin{equation} \label{eq: assumption}
    \max_{m=1,2,\dots, M} |y^m_j| \le C \sqrt{\lambda_j \Delta t},
\end{equation}
where $\lambda_j$ is the $j$-th diagonal element of $\Lambda$, and $C$ is a constant. If $\beta = 1$ and  $L_I, L$ and $\Delta t$ are chosen such that
    \begin{equation*}
        \frac{4^{L_I+d}}{\pi^{2d}} \exp \Big( 2CL\sqrt{\Delta t}\sum_{j=1}^d \sqrt{\lambda_j} \Big) \le \frac{1}{\varepsilon},
    \end{equation*}
    then for all sampling or quadrature points $\mathbf{Z}_{k+1}^{n,m}$ of $\mathbf{Z}_{k+1}$, $n=1,2,\dots,N, m = 1,2,\dots,M$, we have
    \begin{equation*}
        b(\mathbf{Z}_{k+1}^{n,m}) > \varepsilon.
    \end{equation*}
\end{prop}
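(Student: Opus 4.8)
The plan is to reduce the claim to a coordinatewise lower bound on the factors of the bubble function (with $\beta=1$) and then take a product. Writing $\mathbf{Z}_{k+1}^{n,m}=\psi(\psi^{-1}(\mathbf{z}^n)+\mathbf{y}^m)$ componentwise via \eqref{eq: pts next step} and \eqref{eq: mapping for unbounded}, each coordinate is $Z_{k+1,j}^{n,m}=\tanh(\arctanh(z_j^n)+L y_j^m)$, so that
\[
b(\mathbf{Z}_{k+1}^{n,m})=\prod_{j=1}^d\Big(1-\tanh^2\big(\arctanh(z_j^n)+L y_j^m\big)\Big).
\]
The cornerstone is the pointwise inequality: for any $z\in(-1,1)$ and any shift $s\in\R$,
\[
1-\tanh^2(\arctanh(z)+s)\ge (1-z^2)\,e^{-2|s|}.
\]
I would prove this from the hyperbolic addition formula, which gives the exact identity $1-\tanh^2(\arctanh(z)+s)=\dfrac{(1-z^2)\,(1-\tanh^2 s)}{(1+z\tanh s)^2}$; bounding the denominator by $(1+|\tanh s|)^2$ (using $|z|\le 1$) and simplifying $\dfrac{1-\tanh^2 s}{(1+|\tanh s|)^2}=\dfrac{1-\tanh|s|}{1+\tanh|s|}=e^{-2|s|}$ then yields the claim. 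The virtue of this route is that it produces the exponential factor with no spurious multiplicative constant.

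Applying the inequality with $z=z_j^n$ and $s=Ly_j^m$, and invoking the growth assumption \eqref{eq: assumption}, gives per coordinate $1-(Z_{k+1,j}^{n,m})^2\ge (1-(z_j^n)^2)\,e^{-2CL\sqrt{\lambda_j\Delta t}}$, and hence
\[
b(\mathbf{Z}_{k+1}^{n,m})\ge\Big(\prod_{j=1}^d(1-(z_j^n)^2)\Big)\exp\Big(-2CL\sqrt{\Delta t}\sum_{j=1}^d\sqrt{\lambda_j}\Big).
\]
It then remains to bound the grid factor $\prod_{j}(1-(z_j^n)^2)=b(\mathbf{z}^n)$ from below. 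Using the interpolation-point estimate \eqref{eq: estimate interpolation points}, $|z_j^n|\le\cos(\pi/2^{\ell_j-1})$, so $1-(z_j^n)^2\ge\sin^2(\pi/2^{\ell_j-1})$, and Jordan's inequality $\sin x\ge\tfrac{2}{\pi}x$ on $[0,\pi/2]$ gives $\sin^2(\pi/2^{\ell_j-1})\ge \pi^2/4^{\ell_j}$. Multiplying over $j$ and using $\sum_j\ell_j=|\Bell|\le L_I+d$ produces $b(\mathbf{z}^n)\ge \pi^{2d}/4^{|\Bell|}\ge \pi^{2d}/4^{L_I+d}$.

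Combining the two displays yields $b(\mathbf{Z}_{k+1}^{n,m})\ge \dfrac{\pi^{2d}}{4^{L_I+d}}\exp\big(-2CL\sqrt{\Delta t}\sum_{j}\sqrt{\lambda_j}\big)$, and the hypothesis $\dfrac{4^{L_I+d}}{\pi^{2d}}\exp\big(2CL\sqrt{\Delta t}\sum_{j}\sqrt{\lambda_j}\big)\le 1/\varepsilon$ is precisely the assertion that this lower bound is at least $\varepsilon$, which closes the argument.

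The step I expect to be delicate is the uniform control of the grid factor at the coarsest level. The clean per-coordinate estimate $\sin^2(\pi/2^{\ell_j-1})\ge \pi^2/4^{\ell_j}$ degenerates when $\ell_j=1$, where the relevant node is the midpoint $z_j^n=0$ and the true factor is exactly $1$ rather than $\ge\pi^2/4$; the careful point is to organize the product so that the central coordinates are treated on their own (their factor equals $1$) while the bound $\pi^2/4^{\ell_j}$ is applied only for $\ell_j\ge 2$, and to verify that the resulting exponent estimate $|\Bell|\le L_I+d$ still delivers the stated constant. The other point requiring care, though routine once set up, is the hyperbolic identity step, where symmetrizing the denominator too crudely would cost an unnecessary factor per coordinate and break the $4^{L_I+d}$ scaling.
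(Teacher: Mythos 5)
Your pointwise inequality is correct, and it is in fact a cleaner, fully rigorous version of what the paper does: the paper sets $\eta_j^{n,m}=\frac{1+z_j^n}{1-z_j^n}\exp(2Ly_j^m)$, restricts ``without loss of generality'' to the regime $\eta_j^{n,m}\to+\infty$, and argues via a binomial expansion that $\prod_j\eta_j^{n,m}<\varepsilon^{-1}$ suffices, whereas your exact identity $1-\tanh^2(\arctanh z+s)=(1-z^2)(1-\tanh^2 s)/(1+z\tanh s)^2$ gives the per-coordinate estimate $1-(Z^{n,m}_{k+1,j})^2\ge(1-(z_j^n)^2)e^{-2|s|}$ with no asymptotic caveat and no lost constant. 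Up to that point your route and the paper's are essentially the same: componentwise control of the tanh shift, then the grid estimate \eqref{eq: estimate interpolation points} together with an elementary trigonometric bound, arriving at the same constant $\pi^{2d}/4^{L_I+d}$.

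The genuine gap is exactly the step you flagged, and the repair you sketch does not deliver the stated constant. The intermediate claim $b(\mathbf{z}^n)\ge\pi^{2d}/4^{L_I+d}$ is false in general: take $\Bell=(L_I+1,1,\dots,1)$ and the inner node with $z_1^n=\cos(\pi/2^{L_I})$ and $z_j^n=0$ for $j\ge2$; then $b(\mathbf{z}^n)=\sin^2(\pi/2^{L_I})\le\pi^2/4^{L_I}$, which is strictly smaller than $\pi^{2d}/4^{L_I+d}=(\pi^2/4)^d\,4^{-L_I}$ as soon as $(\pi^2/4)^d>\pi^2$, i.e.\ for every $d\ge3$. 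The structural reason is that the per-coordinate factor $\pi^2/4^{\ell_j}$ exceeds $1$ exactly when $\ell_j=1$, so the ``gain'' $\pi^2/4$ per coordinate is unavailable at center coordinates: treating those as contributing factor $1$ and applying the bound only for $\ell_j\ge2$ yields $b(\mathbf{z}^n)\ge(\pi^2/4)^s\,4^{-L_I}$, with $s$ the number of non-center coordinates, whose worst case over the inner sparse grid is of order $4^{-L_I}$, not $(\pi^2/4)^d\,4^{-L_I}$ (the latter is the best case $s=d$, and it can even exceed $1$ once $d\gtrsim 1.5\,L_I$, while $b\le1$ always). Consequently the stated hypothesis $\frac{4^{L_I+d}}{\pi^{2d}}\exp\big(2CL\sqrt{\Delta t}\sum_j\sqrt{\lambda_j}\big)\le1/\varepsilon$ is too weak to close your final display; what your argument actually establishes is the conclusion under the stronger hypothesis $4^{L_I}\exp\big(2CL\sqrt{\Delta t}\sum_j\sqrt{\lambda_j}\big)\le1/\varepsilon$. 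For what it is worth, the paper's own proof commits the same error silently: its first product inequality applies \eqref{eq: estimate interpolation points} coordinatewise, which at $\ell_j=1$ reads $1\le\frac{1+\cos\pi}{1-\cos\pi}=0$, so its bound on $\prod_j\eta_j^{n,m}$ likewise fails at, e.g., the center grid point when $d$ is large relative to $L_I$. Your write-up is therefore more careful than the paper's in the shift estimate and more honest about the coarse-level defect, but as a proof of the proposition as stated it leaves that same hole open.
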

\begin{proof}
Using \eqref{eq: pts next step}, we have
$\mathbf{Z}_{k+1}^{n,m} = \psi(\psi^{-1}(\mathbf{z}^n) + \mathbf{y}^m)$,
where $\{\mathbf{z}^n\}_{n=1}^N$ are inner sparse grid interpolation points and $\{\mathbf{y}^m\}_{m=1}^M$ are sampling or quadrature points of $\mathbf{Y}$. Thus the $j$-th coordinate of $\mathbf{Z}_{k+1}^{n,m}$ is given by
    \begin{align*}
        Z_{k+1}^{n,m,j} = 1-\frac{2}{1+\eta_j^{n,m}}, \quad \mbox{with }
\eta_j^{n,m}:=\frac{1+z^n_j}{1-z^n_j}\exp(2Ly^m_j).
    \end{align*}
For $\beta = 1$, to ensure
$b(\mathbf{z}) = \prod_{j=1}^d (1-z_j^2) > \varepsilon$,
it suffices to prove $\prod_{j=1}^d (1-|z_j|) >\varepsilon$.
Without loss of generality, consider the case $Z_{k+1}^{n,m,j} \to 1$, i.e., $\eta_j^{n,m}\to+\infty$. Clearly, we have
\begin{equation*}
  \prod_{j=1}^d (1-Z_{k+1}^{n,m,j}) = \prod_{j=1}^d \frac{2}{1+\eta_j^{n,m}} > \varepsilon \quad \Leftrightarrow \quad
  \prod_{j=1}^d \left( 1+\eta_j^{n,m} \right) < \frac{2^d}{\varepsilon}.
  \end{equation*}
Noting that $\eta_j^{n,m}\to+\infty$, the binomial expansion implies that it suffices to have
$\prod_{j=1}^d \eta_j^{n,m} <\varepsilon^{-1}$. Using the relationship
    \begin{equation*}
        \frac{1+\cos x}{1-\cos x} = \frac{1}{\tan^2 \frac{x}{2}} \le \frac{4}{x^2} \quad \text{ as } x\to 0^+,
    \end{equation*}
the inequality \eqref{eq: estimate interpolation points}, and the assumption \eqref{eq: assumption}, we obtain
    \begin{equation*}
        \begin{split}
            \prod_{j=1}^d \left( \frac{1+z^n_j}{1-z^n_j}\exp(2Ly^m_j) \right) &\le \prod_{j=1}^d \frac{1+\cos\left(\frac{\pi}{2^{\ell_j-1}}\right)}{1-\cos\left(\frac{\pi}{2^{\ell_j-1}}\right)} \exp(2CL \sqrt{\lambda_j \Delta t}) \\
            &\le \left(\frac{4}{\pi^2}\right)^d 4^{\sum_{j=1}^d\ell_j - d} \exp \left( 2CL\sqrt{\Delta t}\sum_{j=1}^d \sqrt{\lambda_j} \right).
        \end{split}
    \end{equation*}
Since $\sum_{j=1}^d\ell_i\leq L_I+d$, this proves the desired assertion.
\end{proof}

Last, we discuss the computational complexity of the algorithm. By introducing the bubble function, the interpolation functions $\overline{u}_k$ have zero boundary values. Thus, at each time step, we require $N$ evaluations of $u_k$ only on the inner sparse grids, where each evaluation is approximated by equation \eqref{eq: MC}, \eqref{eq: QMC} or \eqref{eq: SG quadrature} with $M$ sampling or quadrature points. The number of inner sparse grids $N$ of level $L_I = 5$ in dimension $d$ is listed in Table \ref{tab: num SG}, which also shows the numbers for tensor-product full grids $\Tilde{N}_{full}$ and CGL sparse grids with boundary points $\Tilde{N}_{CGL}$. Unlike full grids, the number of sparse grids does not increase exponentially as the dimension increases. In particular, the inner sparse grids account for less than three percent of the CGL sparse grids in $d=10$. This represents a dramatic reduction of the evaluation points.

\begin{figure}
\begin{minipage}[hbt!]{.5\textwidth}
    \hspace{2cm}
    \begin{tabular}{c c c c}
        \toprule
        $d$ & $\Tilde{N}_{full}$ & $\Tilde{N}_{CGL}$ & $N$ \\
        \midrule
        2 & 961    & 145   & 81 \\
        3 & 29791  & 441   & 151 \\
        4 & 923521 & 1105  & 241 \\
        5 & 2.8629e+7   & 2433  & 351 \\
        6 & 8.8750e+8   & 4865  & 481 \\
        7 & 2.7513e+10  & 9017  & 631 \\
        8 & 8.5289e+11  & 15713 & 801 \\
        9 & 2.6440e+13  & 26017 & 991 \\
        10 & 8.1963e+14 & 41265 & 1201 \\
        \bottomrule
    \end{tabular}
\end{minipage}
\begin{minipage}[hbt!]{.5\textwidth}
    \includegraphics[width=0.7\textwidth]{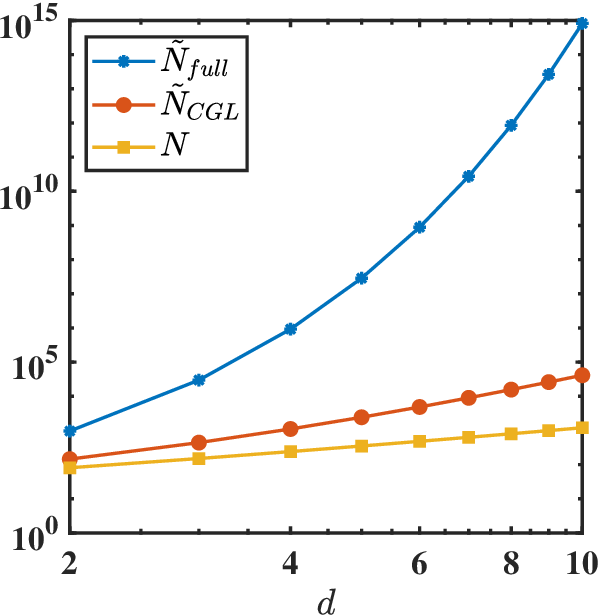}
\end{minipage}
\caption{The number of tensor-product full grids $\Tilde{N}_{full}$, the number of Chebyshev-Gauss-Lobatto sparse grids $\Tilde{N}_{CGL}$, and the number of inner sparse grids $N$ in the $d$-dimensional hypercube with level $L_I= 5$.}
\label{tab: num SG}
\end{figure}

\section{Smoothness analysis}\label{sec:analysis}
In this section, we analyze the smoothness of the function $u_k$, in order to justify the use of SGPI, thereby providing solid theoretical underpinnings of its excellent performance. 

First we list several useful notations. Let $\bm{\alpha} = (\alpha_1, \dots, \alpha_d) \in \mathbb{N}_0^d$ be the standard multi-index with $|\bm{\alpha}| = \alpha_1 + \dots + \alpha_d$. Then  $\bm{\alpha} + \bm{\gamma} := (\alpha_1 + \gamma_1, \dots, \alpha_d + \gamma_d)$, $\bm{\alpha}! := \prod_{j=1}^d \alpha_j!$,
and $\bm{\gamma} \preceq \bm{\alpha}$ denotes that each component of the multi-index $\bm{\gamma} = (\gamma_1,\dots,\gamma_d)$ satisfies $\gamma_j \le \alpha_j$. We define the differential operator $D^{\bm{\alpha}}$ by
$D^{\bm{\alpha}}f := \frac{\partial^{|\bm{\alpha}|}f}{\prod_{j=1}^d \partial x_j^{\alpha_j}}$.
For an open set $D \subset \mathbb{R}^d$ and $r\in \mathbb{N}$, the space $\mathcal{C}^r(\overline{D})$ denotes the space of functions with their derivatives of orders up to $r$ being continuous on the closure $\overline{D}$ of $D$, i.e.,
\begin{equation*}
    \mathcal{C}^r(\overline{D}) = \{f: \overline{D} \to \R \mid D^{\bm{\alpha}}f \text{ continuous if }|\bm{\alpha}|\le r \}.
\end{equation*}
Specially, $\mathcal{C}^r(\overline{\mathbb{R}^d})$ consists of functions $f\in \mathcal{C}^r(\mathbb{R}^d)$ such that $D^{\bm{\alpha}}f$ is bounded and uniformly continuous on $\mathbb{R}^d$ for all $|\bm{\alpha}|\le r$ \cite{adams2003sobolev}. We also define $\mathcal{C}^\infty(\overline{\mathbb{R}^d})$ as the intersection of all $\mathcal{C}^r(\overline{\mathbb{R}^d})$ for $r\in \mathbb{N}$, i.e., $\mathcal{C}^\infty(\overline{\mathbb{R}^d}):=\bigcap_{r=1}^{\infty}\mathcal{C}^r(\overline{\mathbb{R}^d})$.

The analysis of SGPI employs the space of functions on $\overline{\Omega}:=[-1,1]^d$ with bounded mixed derivatives \cite{bungartz2004sparse}. Let $F_d^r(\overline{\Omega})$ be the set
of all functions $f:\overline{\Omega}\to \R$ such that $D^{\bm{\alpha}}f$ is continuous for all $\bm{\alpha}\in \mathbb{N}_0^d$ with $\alpha_i\le r$ for all $i$, i.e.,
\begin{equation} \label{eq: function space}
    F_d^r(\overline{\Omega}) := \{f:\overline{\Omega} \to \R \mid D^{\bm{\alpha}} f \text{ continuous if }\alpha_i\le r \text{ for all }i\}.
\end{equation}
We equip the space $F_d^r(\overline{\Omega})$ with the norm
$\|f\|_{F_d^r(\overline{\Omega})} := \max\{\|D^{\bm{\alpha}} f\|_{L^\infty(\overline{\Omega})} \mid \bm{\alpha} \in \mathbb{N}_0^d, \alpha_i\le r\}$.

For the sake of completeness, we present in Proposition \ref{prop: error of sparse-interp} the interpolation error using SGPI described in Section \ref{subsec:sg}.
\begin{prop}[{\cite[Theorem 8, Remark 9]{barthelmann2000high}}]
\label{prop: error of sparse-interp}
For $f\in F_d^r(\overline{\Omega})$, there exists a constant $c_{d,r}$ depending only on $d$ and $r$ such that
\begin{equation*}
    \|A(q,d)(f) - f\|_{L^\infty(\overline{\Omega})} \le c_{d,r} \cdot \Tilde{N}^{-r} \cdot (\log \Tilde{N})^{(r+1)(d-1)} \|f\|_{F_d^r(\overline{\Omega})},
\end{equation*}
where $\Tilde{N} = \Tilde{N}_{CGL}(q,d)$ is the number of CGL sparse grids.
\end{prop}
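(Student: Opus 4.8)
The statement is quoted from \cite[Theorem 8, Remark 9]{barthelmann2000high}, so the plan is to reconstruct their argument, which rests on the hierarchical tensor structure of the Smolyak operator \eqref{eq: SG interp} together with logarithm-free univariate estimates for CGL interpolation. \textbf{Univariate surplus decay:} first I would record the one-dimensional approximation estimate for interpolation at nested CGL points, namely that for $g\in\mathcal{C}^r([-1,1])$ with $g^{(r)}$ sufficiently regular one has a sharp Chebyshev bound $\|g-U^\ell g\|_{L^\infty}\le c\,N_\ell^{-r}\|g^{(r)}\|_{L^\infty}\le c\,2^{-r\ell}\|g^{(r)}\|_{L^\infty}$ \emph{without} a Lebesgue-constant factor, since $N_\ell\sim 2^\ell$. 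Writing $\Delta^\ell g=(U^\ell g-g)-(U^{\ell-1} g-g)$ and applying the triangle inequality then yields the clean surplus decay $\|\Delta^\ell g\|_{L^\infty}\le c\,2^{-r\ell}\|g^{(r)}\|_{L^\infty}$. I would also use the structural fact that $\Delta^\ell$ acts on a single coordinate and commutes with differentiation in all other coordinates.

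\textbf{Tensor surplus and the error tail:} for $f\in F_d^r(\overline{\Omega})$ the telescoping identity gives the hierarchical representation $f=\sum_{\bm{\ell}\in\mathbb{N}^d}(\Delta^{\ell_1}\otimes\cdots\otimes\Delta^{\ell_d})f$, and by \eqref{eq: SG interp} the operator $A(q,d)$ is exactly its truncation to $|\bm{\ell}|\le q$; hence the interpolation error is the tail $f-A(q,d)f=\sum_{|\bm{\ell}|>q}(\Delta^{\ell_1}\otimes\cdots\otimes\Delta^{\ell_d})f$. The crucial estimate is a product bound on each tensor surplus, which I would obtain by peeling off one coordinate at a time: applying the univariate surplus decay in direction $i$ contributes the factor $2^{-r\ell_i}$, and because $\Delta^{\ell_j}$ commutes with $\partial_{x_i}^r$ for $j\ne i$, each subsequent direction only ever acts on mixed derivatives of $f$. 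Since $f$ has \emph{bounded mixed derivatives} up to order $r$ in every variable, the iteration closes and gives $\|(\Delta^{\ell_1}\otimes\cdots\otimes\Delta^{\ell_d})f\|_{L^\infty}\le c\big(\prod_{i=1}^d 2^{-r\ell_i}\big)\|f\|_{F_d^r(\overline{\Omega})}$, again with no logarithmic factors.

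\textbf{Summation and conversion to the number of points:} summing this product bound over the tail gives $\|f-A(q,d)f\|_{L^\infty}\le c\,\|f\|_{F_d^r(\overline{\Omega})}\sum_{s>q}\binom{s-1}{d-1}2^{-rs}$, using that the number of $\bm{\ell}\in\mathbb{N}^d$ with $|\bm{\ell}|=s$ equals $\binom{s-1}{d-1}\sim s^{d-1}/(d-1)!$. The geometric factor makes the sum dominated by the first shell $s=q+1$, so the error is bounded by $c\,q^{d-1}2^{-rq}\|f\|_{F_d^r(\overline{\Omega})}$. It remains to translate the level $q$ into the number of points $\Tilde{N}=\Tilde{N}_{CGL}(q,d)$: for fixed $d$ and $q\to\infty$ this cardinality grows like $\Tilde{N}\sim c_d\,2^q q^{d-1}$ (consistent with \eqref{eq: cardinality of N_CGL}), whence $q\sim\log\Tilde{N}$ and $2^{-rq}\le c\,\Tilde{N}^{-r}(\log\Tilde{N})^{r(d-1)}$. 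Multiplying by the shell count $q^{d-1}\le c\,(\log\Tilde{N})^{d-1}$ recombines to exactly $\Tilde{N}^{-r}(\log\Tilde{N})^{(r+1)(d-1)}$, with $c_{d,r}$ absorbing all $d$- and $r$-dependent constants.

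\textbf{Main obstacle:} the delicate point is the clean tensor surplus bound. One must propagate the single-direction approximation rate through a $d$-fold iteration, commuting each difference operator past the derivatives acting on the other coordinates, so that only the mixed-derivative norm $\|f\|_{F_d^r(\overline{\Omega})}$ is invoked; equally, the underlying univariate estimate has to be the sharp, logarithm-free Chebyshev bound rather than the crude ``best approximation times Lebesgue constant'' estimate. Keeping both ingredients free of spurious logarithms is precisely what guarantees that the only logarithms in the final rate come from the simplex shell count and the $q\leftrightarrow\Tilde{N}$ conversion, producing the exponent $(r+1)(d-1)$ rather than something larger.
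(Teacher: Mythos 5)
First, a point of reference: the paper does not prove this proposition at all --- it is imported verbatim from \cite[Theorem 8, Remark 9]{barthelmann2000high} --- so your reconstruction must be judged on its own merits and against that paper's argument. Your overall architecture is the standard one and is largely sound: the hierarchical-surplus decomposition $f=\sum_{\bm{\ell}}(\Delta^{\ell_1}\otimes\cdots\otimes\Delta^{\ell_d})f$ with $A(q,d)f$ as its truncation, the tensor surplus bound obtained by peeling off one coordinate at a time and commuting each $\Delta^{\ell_j}$ past derivatives in the other variables (this is exactly where the mixed-derivative norm $\|f\|_{F_d^r(\overline{\Omega})}$ enters), the shell count $\binom{s-1}{d-1}$, and the conversion $\Tilde{N}\asymp c_d\,2^q q^{d-1}$, $q\asymp\log\Tilde{N}$, for fixed $d$. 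The final recombination into $\Tilde{N}^{-r}(\log\Tilde{N})^{(r+1)(d-1)}$ is arithmetically correct \emph{conditional on} your surplus estimate. (Barthelmann--Novak--Ritter argue by induction on $d$ through the recursion $A(q,d)=\sum_i\Delta^i\otimes A(q-i,d-1)$ rather than by a direct tail sum, but that structural difference is minor; the crux is the same.)

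The genuine gap is the step you yourself identify as the ``main obstacle'' and then assert rather than prove: the logarithm-free univariate estimate $\|g-U^\ell g\|_{L^\infty}\le c\,N_\ell^{-r}\|g^{(r)}\|_{L^\infty}$ for $g\in\mathcal{C}^r([-1,1])$. For the class actually appearing in the proposition --- $F_d^r(\overline{\Omega})$, whose norm controls only sup-norms of mixed derivatives, with no bounded-variation, H\"older, or Dini information about the top derivatives --- this estimate is false. What holds for CGL interpolation of a $\mathcal{C}^r$ function is $\|g-U^\ell g\|_{\infty}\le(1+\Lambda_{N_\ell})E_{N_\ell-1}(g)\le c\,N_\ell^{-r}\log N_\ell\,\|g^{(r)}\|_{\infty}$, and the logarithm cannot be removed uniformly over the $\mathcal{C}^r$ unit ball: a mollified sawtooth whose nodal values are $\pm$(half the local node spacing), with signs matched to $\mathrm{sign}\,\ell_j(x^*)$ at a point $x^*$ where the Lebesgue function is of order $\log N_\ell$, lies in the $\mathcal{C}^1$ unit ball yet has interpolation error $\gtrsim N_\ell^{-1}\log N_\ell$, and the same mechanism persists for higher $r$. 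The sharp log-free Chebyshev bounds you appeal to (Trefethen-type results) require $g^{(r)}$ of bounded variation or H\"older continuous --- precisely the ``sufficiently regular'' hedge in your first paragraph --- and that extra regularity is not granted by $\|f\|_{F_d^r(\overline{\Omega})}$, so the tensor surplus bound cannot be closed with that norm on the right-hand side. If you rerun your computation with the honest univariate bound $\|\Delta^\ell g\|_\infty\le c\,\ell\,2^{-r\ell}\|g\|_{\mathcal{C}^r}$, each shell acquires the extra factor $\sum_{|\bm{\ell}|=s}\prod_j\ell_j=\binom{s+d-1}{2d-1}\sim s^{2d-1}$ in place of $\binom{s-1}{d-1}\sim s^{d-1}$, and the final exponent inflates to $(r+2)(d-1)+1$ instead of $(r+1)(d-1)$. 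So, as written, your argument proves a strictly weaker statement; controlling the accumulated Lebesgue constants sharply enough to recover the exponent $(r+1)(d-1)$ is exactly the nontrivial content of the cited theorem, and it cannot be waved through.
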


To provide theoretical guarantees of applying SGPI, we next prove $\overline{u}_k \in F_d^r(\overline{\Omega})$ in Theorem \ref{thm: bounded-mixed}.
This result follows by Lemma \ref{lem: 1} \footnote{At a first glance of \eqref{eq: continuation value as integral}, the $\mathcal{C}^\infty(\R^d)$ smoothness of $f_{K-1}$ follows directly by the fact that convolution smooths out the payoff function. However, the payoff function $G$ of the rotated log-price is not in  $L^1(\R^d)$, neither does the density function have compact support. Therefore, we provide a proof of the $\mathcal{C}^\infty(\overline{\R^d})$ regularity in Lemma \ref{lem: 1} mainly using the dominated convergence theorem.} and Lemma \ref{lem: 2}.
\begin{lem}[$f_{K-1} \in \mathcal{C}^\infty(\overline{\R^d})$] \label{lem: 1}
Let $g$ be the payoff of a put option. Then the continuation value function $f_{K-1}$ defined in \eqref{eq: DP rotated log-price} is infinitely differentiable, bounded and uniformly continuous with all its derivatives up to the order $r$ for any $r\in \mathbb{N}$, i.e., $f_{K-1} \in \mathcal{C}^\infty(\overline{\R^d})$.
\end{lem}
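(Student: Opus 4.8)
The plan is to represent $f_{K-1}$ as a convolution of the \emph{bounded} payoff against a smooth Gaussian kernel, and then transfer all differentiation onto the kernel. Since the rotated log-price process has independent increments and, conditionally on its value $\mathbf{x}$ at $t_{K-1}$, its value at $t_K$ is $\mathbf{x}+\mathbf{Y}$ with $\mathbf{Y}\sim\mathcal{N}\big(Q^\top(r-\bm\delta-\tfrac12\Sigma^2\mathbf{1})\Delta t,\Lambda\Delta t\big)$ having independent Gaussian components and joint density $\rho(\mathbf{y})=\prod_{i=1}^d\rho_i(y_i)$, I would write, with $G:=g\circ\phi$ the payoff in the rotated log-price variable and $c:=e^{-r\Delta t}$,
\begin{equation*}
f_{K-1}(\mathbf{x}) = c\int_{\R^d} G(\mathbf{x}+\mathbf{y})\,\rho(\mathbf{y})\,\mathrm{d}\mathbf{y} = c\int_{\R^d} G(\mathbf{z})\,\rho(\mathbf{z}-\mathbf{x})\,\mathrm{d}\mathbf{z}.
\end{equation*}
Two structural facts drive the whole argument: first, for a put payoff one has $0\le G\le\kappa$, so $G\in L^\infty(\R^d)$; second, $\rho$ is a Schwartz function, so every derivative $D^{\bm\alpha}\rho$ lies in $L^1(\R^d)$ and decays faster than any polynomial.

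The core step is to show that for every multi-index $\bm\alpha\in\N_0^d$,
\begin{equation*}
D^{\bm\alpha} f_{K-1}(\mathbf{x}) = c\,(-1)^{|\bm\alpha|}\int_{\R^d} G(\mathbf{z})\,(D^{\bm\alpha}\rho)(\mathbf{z}-\mathbf{x})\,\mathrm{d}\mathbf{z},
\end{equation*}
which I would establish by induction on $|\bm\alpha|$, differentiating under the integral sign one coordinate at a time. For the inductive step I would form the difference quotient in $x_j$, apply the mean value theorem to $D^{\bm\alpha}\rho$ in that coordinate, and invoke the dominated convergence theorem: for $\mathbf{x}$ in a fixed ball $\{|\mathbf{x}|\le R\}$ and $|h|\le 1$, the integrand is bounded in modulus by $\kappa\,\sup_{|\mathbf{s}|\le R+1}\big|(D^{\bm\alpha+\mathbf{e}_j}\rho)(\mathbf{z}-\mathbf{s})\big|$, which is integrable in $\mathbf{z}$ thanks to the super-polynomial Gaussian decay of the derivatives of $\rho$. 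This dominating function is independent of $h$, so the limit may be taken inside the integral and the formula holds on every ball, hence everywhere.

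With the formula in hand, the two remaining properties fall out directly from the convolution structure. Boundedness is immediate,
\begin{equation*}
|D^{\bm\alpha} f_{K-1}(\mathbf{x})| \le c\,\kappa\,\|D^{\bm\alpha}\rho\|_{L^1(\R^d)} \qquad\text{for all }\mathbf{x}\in\R^d,
\end{equation*}
a finite bound independent of $\mathbf{x}$ since $D^{\bm\alpha}\rho\in L^1(\R^d)$. For uniform continuity I would bound the increment by
\begin{equation*}
|D^{\bm\alpha} f_{K-1}(\mathbf{x}+\mathbf{h}) - D^{\bm\alpha} f_{K-1}(\mathbf{x})| \le c\,\kappa\,\big\|(D^{\bm\alpha}\rho)(\cdot-\mathbf{h}) - D^{\bm\alpha}\rho\big\|_{L^1(\R^d)},
\end{equation*}
and then appeal to the continuity of translation in $L^1$: the right-hand side tends to $0$ as $\mathbf{h}\to\mathbf{0}$ and, crucially, the bound is uniform in $\mathbf{x}$. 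Thus every $D^{\bm\alpha} f_{K-1}$ is bounded and uniformly continuous, i.e. $f_{K-1}\in\mathcal{C}^r(\overline{\R^d})$ for every $r\in\N$, which gives $f_{K-1}\in\mathcal{C}^\infty(\overline{\R^d})$.

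The main obstacle, as flagged in the footnote, is precisely that one cannot run the usual ``convolution smooths out the payoff'' argument naively: $G$ is not integrable (it stays near $\kappa$ as $\mathbf{x}\to\infty$ in certain directions) and the Gaussian kernel has no compact support, so one can neither freely move derivatives onto $G$ nor quote the standard mollification lemmas. The resolution is to keep $G$ merely bounded --- which is exactly what the put assumption guarantees --- and to extract all smoothness from the Schwartz decay of $\rho$ via the dominated convergence theorem; the \emph{uniform} (rather than merely local) control needed for membership in $\mathcal{C}^\infty(\overline{\R^d})$ then comes for free, since the derivative formula is a genuine convolution whose $L^1$ bounds do not depend on $\mathbf{x}$.
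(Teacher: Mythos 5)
Your proof is correct and follows essentially the same route as the paper: both write $f_{K-1}$ as a convolution of the bounded put payoff $G$ with the Gaussian transition density, shift all derivatives onto the Gaussian kernel by differentiating under the integral via the dominated convergence theorem, and conclude boundedness and uniform continuity of every $D^{\bm{\alpha}}f_{K-1}$ from the $L^1$ bounds on the kernel's derivatives. The only cosmetic difference is that where the paper cites \cite[Proposition 8.8]{folland1999real} for the bounded-times-$L^1$ convolution properties, you reprove that step directly via continuity of translation in $L^1$.
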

\begin{proof}
Consider the conditional expectation without the discount factor $e^{-r\Delta t}$:
\begin{equation} \label{eq: continuation value as integral}
        f(\mathbf{x}) = \mathbb{E}[G(\Tilde{X}_K) | \Tilde{X}_{K-1} = \mathbf{x}]
       = \int_{\R^d} G(\mathbf{y}) p(\mathbf{x},\mathbf{y})\,\mathrm{d}\mathbf{y},
\end{equation}
where $G(\cdot) = g(\phi(\cdot))$ is the payoff with respect to the rotated log-price, and $p(\mathbf{x},\mathbf{y})$ is the density of the Gaussian distribution $\mathcal{N}(\mathbf{x} + Q^\top (r - \bm{\delta} - \frac{1}{2}\Sigma^2\mathbf{1})\Delta t, \Lambda \Delta t)$, that is,
\begin{equation} \label{eq: transition density}
    p(\mathbf{x},\mathbf{y}) = \frac{1}{(2\pi)^{d/2}\sqrt{ \det( \Lambda  )\Delta t}}\exp \left( -\frac{1}{2 \Delta t}(\mathbf{y} - \mathbf{x} - \bm{\mu})^\top \Lambda^{-1}  (\mathbf{y} - \mathbf{x} - \bm{\mu}) \right)
\end{equation}
with $\bm{\mu} = Q^\top (r - \bm{\delta} - \frac{1}{2}\Sigma^2\mathbf{1})\Delta t$.
Since $g$ is the payoff of a put option, $G$ is bounded in $\R^d$. Let
\begin{equation*}
    P(\mathbf{x}) := \frac{1}{(2\pi)^{d/2}\sqrt{ \det( \Lambda  )\Delta t}}\exp \left( -\frac{1}{2 \Delta t}(\mathbf{x} + \bm{\mu})^\top \Lambda^{-1}  (\mathbf{x} + \bm{\mu}) \right).
\end{equation*}
Then $P\in L^1(\R^d)$ with $\|P\|_1= \int_{\R^d}P(\mathbf{x})\,\mathrm{d}\mathbf{x} = 1$. The representation \eqref{eq: continuation value as integral} is equivalent to
\begin{equation*}
    f(\mathbf{x}) = G*P(\mathbf{x}),
\end{equation*}
where $*$ denotes the convolution operator. Then an application of \cite[Proposition 8.8]{folland1999real} implies that $f$ is bounded and uniformly continuous in $\R^d$, and
\begin{equation} \label{eq: boundedness of f}
    \|f\|_{L^\infty(\R^d)} \le \|G\|_{L^\infty(\R^d)} \|P\|_{L^1(\R^d)} = \|G\|_{L^\infty(\R^d)}.
\end{equation}
Next, we show that $f$ has the bounded first order partial derivatives for all $i\in \{1,2,\dots,d\}$, and
\begin{equation} \label{eq: 1st derivative}
    \frac{\partial f}{\partial x_i}(\mathbf{x}) = G*\frac{\partial P}{\partial x_i}(\mathbf{x}).
\end{equation}
For any fixed $\mathbf{x}_0\in \R^d$, $i\in \{1,2,\dots,d\}$,
\begin{equation*}
    \frac{\partial p}{\partial x_i}(\mathbf{x}_0, \mathbf{y}) = \lim_{\mathbf{x}_n \to \mathbf{x}_0} q_{i,n}(\mathbf{y}),\quad \text{with } q_{i,n}(\mathbf{y}) = \frac{p(\mathbf{x}_n, \mathbf{y}) - p(\mathbf{x}_0, \mathbf{y})}{x_{i,n} - x_{i,0}}.
\end{equation*}
Since the Gaussian density $p(\mathbf{x}, \mathbf{y})$ has bounded partial derivatives $\frac{\partial p}{\partial x_i}(\mathbf{x},\mathbf{y})$ for all $\mathbf{x}, \mathbf{y}$ and $i\in \{1,2,\dots, d\}$, then
\begin{equation} \label{eq: derivative in limit}
    \begin{split}
        \frac{\partial f}{\partial x_i}(\mathbf{x}_0) &= \lim_{\mathbf{x}_n \to \mathbf{x}_0} \frac{f(\mathbf{x}_n) - f(\mathbf{x}_0) }{x_{i,n} - x_{i,0}} \\
        &= \lim_{\mathbf{x}_n \to \mathbf{x}_0} \int_{\R^d} G(\mathbf{y}) \frac{ p(\mathbf{x}_n,\mathbf{y}) - p(\mathbf{x}_0, \mathbf{y}) }{x_{i,n} - x_{i,0}} \,\mathrm{d}\mathbf{y} \\
        &= \lim_{\mathbf{x}_n \to \mathbf{x}_0} \int_{\R^d} G(\mathbf{y}) q_{i,n}(\mathbf{y}) \,\mathrm{d}\mathbf{y}.
    \end{split}
\end{equation}
As $G$ is bounded and for fixed $\mathbf{x}$, the function $p(\mathbf{x},\mathbf{y})$ decays as $\mathcal{O}(e^{-\|\mathbf{y}\|_2^2})$ at infinity, the integrand $G(\mathbf{y})q_{i,n}(\mathbf{y})$ is bounded by some Lebesgue integrable function for all $n$. Hence, by the dominated convergence theorem, one can interchange the limit and integral in the last equation of \eqref{eq: derivative in limit} and thus
\begin{equation*}
    \frac{\partial f}{\partial x_i}(\mathbf{x}_0) = \int_{\R^d} G(\mathbf{y}) \frac{\partial p}{\partial x_i}(\mathbf{x}_0,\mathbf{y}) \,\mathrm{d}\mathbf{y} = G*\frac{\partial P}{\partial x_i}(\mathbf{x}_0).
\end{equation*}
By \cite[Proposition 8.8]{folland1999real}, we obtain that $\frac{\partial f}{\partial x_i}$ is bounded and uniformly continuous in $\R^d$, and
\begin{equation*}
    \left\|\frac{\partial f}{\partial x_i}\right\|_{L^\infty(\R^d)} \le \left\|G\right\|_{L^\infty(\R^d)} \left\|\frac{\partial P}{\partial x_i}\right\|_{L^1(\R^d)}.
\end{equation*}
Using the dominated convergence theorem, we derive the bounded second order partial derivatives
\begin{equation*}
    \frac{\partial^2 f}{\partial x_j \partial x_i}(\mathbf{x}) = G*\frac{\partial^2 P}{\partial x_j \partial x_i}(\mathbf{x}),
\end{equation*}
since for fixed $\mathbf{x}$, the function
\begin{equation*}
    \frac{\partial p}{\partial x_i}(\mathbf{x}, \mathbf{y}) = -\frac{(x_i - y_i + \mu_i)}{\lambda_i \Delta t} p(\mathbf{x},\mathbf{y})
\end{equation*}
decays as $\mathcal{O}(\mathbf{y} e^{-\|\mathbf{y}\|_2^2})$ at infinity. By \cite[Proposition 8.8]{folland1999real}, we obtain $\frac{\partial^2 f}{\partial x_j \partial x_i}$ is bounded and uniformly continuous on $\R^d$, and
\begin{equation*}
    \left\|\frac{\partial^2 f}{\partial x_j \partial x_i}\right\|_{L^\infty(\R^d)} \le  \left\|G\right\|_{L^\infty(\R^d)}\left \|\frac{\partial^2 P}{\partial x_j \partial x_i}\right\|_{L^1(\R^d)}.
\end{equation*}
Repeating the argument yields the boundedness and uniform continuity of the partial derivatives
of $f$ up to the order $r$ for any $r\in \mathbb{N}$. Therefore, $f_{K-1} \in \mathcal{C}^\infty(\overline{\R^d})$.
\end{proof}

\begin{lem}[$f_k\in \mathcal{C}^\infty(\overline{\R^d})$ for $k=0,1, \dots, K-1$] \label{lem: 2}
Let $g$ be the payoff of a put option. Then the continuation value functions $\{f_k\}_{k=0}^{K-1}$ defined in \eqref{eq: DP rotated log-price} are infinitely differentiable, bounded and uniformly continuous with all its derivatives up to the order $r$ for any $r\in \mathbb{N}$, i.e., $f_k\in \mathcal{C}^\infty(\overline{\R^d})$ for $k=0,1, \dots, K-1$.
\end{lem}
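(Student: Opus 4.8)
The plan is to prove the statement by backward induction on $k$, from $k=K-1$ down to $k=0$. The base case $k=K-1$ is exactly Lemma \ref{lem: 1}. For the inductive step I assume $f_{k+1}\in\mathcal{C}^\infty(\overline{\R^d})$ and show $f_k\in\mathcal{C}^\infty(\overline{\R^d})$. The key observation is that $f_k$ has the very same convolution structure already exploited in Lemma \ref{lem: 1}: writing $G:=g\circ\phi$ for the rotated payoff and setting
\[
 h_{k+1}(\mathbf{y}) := \max\big( G(\mathbf{y}),\, f_{k+1}(\mathbf{y}) \big),
\]
the recursion \eqref{eq: DP rotated log-price} becomes $f_k(\mathbf{x}) = e^{-r\Delta t}\,(h_{k+1}*P)(\mathbf{x})$, with $P$ and the transition density $p(\mathbf{x},\mathbf{y})$ exactly those in \eqref{eq: transition density}. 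Thus $f_k$ is, up to the constant $e^{-r\Delta t}$, the convolution of $h_{k+1}$ against the same smooth, rapidly decaying Gaussian density used before.

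I would then observe that the only property of the integrand required by the proof of Lemma \ref{lem: 1} is its boundedness: all the smoothing there is supplied by the factor $P$, whose derivatives are polynomial multiples of a Gaussian and hence lie in $L^1(\R^d)$, so that differentiation under the integral sign (justified by the dominated convergence theorem and \cite[Proposition 8.8]{folland1999real}) places every derivative on $P$ rather than on the integrand. It therefore suffices to verify that $h_{k+1}$ is bounded. This holds because $G$ is bounded by the strike $\kappa$ (the put payoff satisfies $0\le g\le\kappa$), while $f_{k+1}$ is bounded by the inductive hypothesis $f_{k+1}\in\mathcal{C}^\infty(\overline{\R^d})$; the pointwise maximum of two bounded functions is bounded. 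With $h_{k+1}$ in place of $G$, the argument of Lemma \ref{lem: 1} applies verbatim and yields $f_k\in\mathcal{C}^\infty(\overline{\R^d})$, closing the induction.

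The main conceptual point — and the only place where care is needed — is precisely that $h_{k+1}$ itself need not be smooth: the $\max$ operation together with the kink in the payoff $G$ generically destroy differentiability of the integrand. The proof sidesteps this entirely, since the regularity of $f_k$ arises from convolution with the $\mathcal{C}^\infty$ Gaussian density, not from any smoothness of $h_{k+1}$. In particular the inductive step never invokes the $\mathcal{C}^\infty$ regularity of $f_{k+1}$ as such, only its boundedness and measurability; the smoothness property propagates backward through the recursion purely by virtue of the smoothing action of the transition kernel. The routine estimates on $\|D^{\bm{\alpha}}P\|_{L^1(\R^d)}$ and the dominated-convergence justification are identical to those carried out in Lemma \ref{lem: 1} and need not be repeated.
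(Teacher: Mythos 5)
Your proof is correct and follows essentially the same route as the paper: the paper likewise sets $V_{k+1}=\max(G,f_{k+1})$, notes that boundedness of this function is the only property needed, and reapplies the convolution-with-Gaussian argument of Lemma \ref{lem: 1} backward in time, propagating the bound $\|f_k\|_{L^\infty(\R^d)}\le e^{-r\Delta t}\|V_{k+1}\|_{L^\infty(\R^d)}\|P\|_{L^1(\R^d)}$ at each step. Your explicit remark that the induction uses only boundedness (not smoothness) of $f_{k+1}$ is a fair articulation of what is implicit in the paper's argument.
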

\begin{proof}
Let the value at $t_{k+1}$ be
\begin{equation*}
    V_{k+1}(\mathbf{y}) = \max\left( G(\mathbf{y}), f_{k+1}(\mathbf{y}) \right), \quad \text{ for } k = 0,1,\dots, K-2,
\end{equation*}
where $G(\cdot) = g(\phi(\cdot))$ is bounded for a put option. By \eqref{eq: boundedness of f}, $f_{K-1}$ is bounded. Hence, $V_{K-1}$ is bounded in $\R^d$. Using the argument of Lemma \ref{lem: 1}, we obtain
\begin{equation*}
    f_{K-2}(\mathbf{x}) = e^{-r\Delta t}\mathbb{E}[V_{K-1}(\Tilde{\mathbf{X}}_{K-1}) | \Tilde{\mathbf{X}}_{K-2} = \mathbf{x}] = e^{-r\Delta t} V_{K-1}*P(\mathbf{x}) 
\end{equation*}
is infinitely differentiable, bounded and uniformly continuous with all its derivatives up to the order $r$ for any $r\in \mathbb{N}$.
Since $\|f_{K-2}\|_{L^\infty(\R^d)} \le e^{-r\Delta t} \|V_{K-1}\|_{L^\infty(\R^d)} \|P\|_{L^1(\R^d)}$, the value $V_{K-2}$ is bounded in $\R^d$. Similarly, we can obtain $f_k\in \mathcal{C}^\infty(\overline{\R^d})$ for all $k=0,1, \dots, K-1$.
\end{proof}

Using the smoothness of the continuation value functions in $\R^d$, now we can establish the smoothness of the extended interpolation function $\overline{u}_k$ in the bounded domain $\overline{\Omega} = [-1,1]^d$.
\begin{thm}[$\overline{u}_k \in F_d^r(\overline{\Omega}) \text{ for } k=0,1,\dots,K-1$]\label{thm: bounded-mixed}
Let $g$ be the payoff of a put option. Let $u_k: \Omega \to \R$ be the function defined in \eqref{eq: DP bubble} with $u_k(\mathbf{z}) = f_k(\psi^{-1}(\mathbf{z}))b(\mathbf{z})$. Let $b(\cdot)$ be the bubble function of the form $\eqref{eq: bubble function}$ and $\psi^{-1}$ be the mapping between unbounded and bounded domains defined in \eqref{eq: mapping for unbounded}. If $\beta\ge r$ with $r\in \mathbb{N}$, then $u_k$ can be extended to $\overline{u}_k: \overline{\Omega} \to \R$ such that $\overline{u}_k$ has bounded mixed derivatives up to the order $r$, i.e.,
\begin{equation*}
    \overline{u}_k \in F_d^r(\overline{\Omega}),\quad \text{ for } k=0,1,\dots,K-1.
\end{equation*}
Furthermore, for $\bm{\alpha}\in \mathbb{N}_0^d$ with $\alpha_j \le r$, there holds
\begin{equation} \label{eq: exact mixed derivatives}
\begin{aligned}
    D^{\bm{\alpha}}\overline{u}_k (\mathbf{z}) =
    \sum_{\substack{\bm{\gamma} + \bm{\zeta} = \bm{\alpha}\\ \bm{\gamma} \preceq \bm{\alpha},\bm{\zeta} \preceq \bm{\alpha} }} \frac{\bm{\alpha}!}{\bm{\gamma}! \bm{\zeta}!} D^{\bm{\gamma}}f_k \prod_{j=1:d, \gamma_j = 0} \left( (1-z_j^2)^{\beta - \zeta_j}Q_{\zeta_j}(z_j)\right) \\
    \times\prod_{j=1:d, \gamma_j\ge 1} \left(\frac{1}{L} (1-z_j^2)^{\beta - \zeta_j - \gamma_j} Q_{\zeta_j}(z_j)P_{\gamma_j}(z_j) \right),
\end{aligned}
\end{equation}
where $Q_{\zeta_j}$ and $P_{\gamma_j}$ are univariate polynomials of degrees $\zeta_j$ and $\gamma_j-1$ defined on $[-1,1]$.
\end{thm}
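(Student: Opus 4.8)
The plan is to differentiate the product $u_k=(f_k\circ\psi^{-1})\,b$ by the Leibniz rule and then control each resulting summand, drawing the smoothness of the composition factor from Lemma~\ref{lem: 2} and using the vanishing of the bubble $b$ to cancel the boundary blow-up introduced by $\psi^{-1}$. Since membership in $F_d^r(\overline{\Omega})$ concerns only multi-indices $\bm{\alpha}$ with $\alpha_i\le r$, it suffices to produce, for each such $\bm{\alpha}$, the explicit representation \eqref{eq: exact mixed derivatives} on the open cube $\Omega$ and then argue that every factor extends to $\overline{\Omega}$ as a bounded and continuous function. The overall structure is therefore: (i) differentiate the two factors separately; (ii) recombine by Leibniz and track the powers of $(1-z_j^2)$; (iii) conclude boundedness; (iv) settle the extension to $\partial\Omega$.

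First I would handle the two factors in isolation. For the bubble, a one-line induction on $\zeta_j$ shows $\tfrac{d^{\zeta_j}}{dz_j^{\zeta_j}}(1-z_j^2)^{\beta}=(1-z_j^2)^{\beta-\zeta_j}Q_{\zeta_j}(z_j)$ with $\deg Q_{\zeta_j}=\zeta_j$ (one differentiation lowers the exponent by one and multiplies by a degree-one polynomial); tensorizing over $j$ gives $D^{\bm{\zeta}}b$. For the composition, the decisive point is that $\psi^{-1}$ from \eqref{eq: mapping for unbounded} acts componentwise with $\tfrac{d}{dz_j}\psi^{-1}_j=\tfrac{1}{L(1-z_j^2)}$, so each additional $z_j$-differentiation contributes exactly one more power of $(1-z_j^2)^{-1}$. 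A Faà~di~Bruno / induction argument then shows that $\partial_{z_j}^{\gamma_j}(f_k\circ\psi^{-1})$ carries the singular factor $\tfrac1L(1-z_j^2)^{-\gamma_j}P_{\gamma_j}(z_j)$ with $\deg P_{\gamma_j}=\gamma_j-1$; the structural fact I would emphasize is that the denominator power is \emph{exactly} $\gamma_j$, independent of the partition, because the block sizes in any partition of $\gamma_j$ add up to $\gamma_j$, so all contributions share the same singular order and can be collected into the stated form.

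Next I would assemble the pieces. Applying $D^{\bm{\alpha}}(f_k\circ\psi^{-1})b=\sum_{\bm{\gamma}+\bm{\zeta}=\bm{\alpha}}\tfrac{\bm{\alpha}!}{\bm{\gamma}!\bm{\zeta}!}\,D^{\bm{\gamma}}(f_k\circ\psi^{-1})\,D^{\bm{\zeta}}b$ and substituting the two factor computations yields \eqref{eq: exact mixed derivatives}. The key bookkeeping is the exponent of $(1-z_j^2)$ in each summand: the bubble contributes $\beta-\zeta_j$ and the composition contributes $-\gamma_j$, for a net power $\beta-\zeta_j-\gamma_j=\beta-\alpha_j$, which is nonnegative precisely because $\beta\ge r\ge\alpha_j$. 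Hence, in every term, each coordinate factor is a polynomial times a nonnegative power of $(1-z_j^2)$ — bounded on $[-1,1]$ — multiplied by a derivative $D^{\bm{\gamma}}f_k$ evaluated at $\psi^{-1}(\mathbf{z})$, which is bounded by Lemma~\ref{lem: 2}. This gives the boundedness of $D^{\bm{\alpha}}u_k$ on $\Omega$ at once, and continuity on the open cube is immediate since $u_k$ is smooth there.

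The main obstacle is the continuous extension to $\partial\Omega$: boundedness alone does not place $\overline{u}_k$ in $F_d^r(\overline{\Omega})$, and as $\mathbf{z}\to\partial\Omega$ the argument $\psi^{-1}(\mathbf{z})$ escapes to infinity, where the uniformly continuous derivatives $D^{\bm{\gamma}}f_k$ of Lemma~\ref{lem: 2} need not by themselves converge. I would split according to the net power $\beta-\alpha_j$: on any boundary face where $\beta>\alpha_j$ the factor $(1-z_j^2)^{\beta-\alpha_j}\to0$ forces the summand to vanish, which is compatible with the zero extension and yields continuity there. The delicate borderline is $\beta=\alpha_j=r$, where the surviving polynomial factor $Q_{\alpha_j}$ (resp.\ $P_{\gamma_j}$) does not vanish at $z_j=\pm1$; here one must invoke the limiting behaviour of $f_k$ and its derivatives as $\psi^{-1}(\mathbf{z})\to\infty$, exploiting the payoff structure (through $f_k=V_{k+1}*P$ in the proof of Lemma~\ref{lem: 2}) to guarantee the requisite convergence. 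I expect this boundary analysis at the borderline exponent to be the principal technical difficulty; once it is in place, combining it with the interior boundedness established above gives $\overline{u}_k\in F_d^r(\overline{\Omega})$ for every $k=0,1,\dots,K-1$.
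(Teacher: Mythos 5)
Your proposal takes essentially the same route as the paper's proof: Leibniz's rule applied to $D^{\bm{\alpha}}\bigl((f_k\circ\psi^{-1})\,b\bigr)$, the closed-form derivatives $\tfrac{\mathrm{d}^i}{\mathrm{d}z_j^i}(1-z_j^2)^{\beta}=(1-z_j^2)^{\beta-i}Q_i(z_j)$ and $\tfrac{\mathrm{d}^\ell x_j}{\mathrm{d}z_j^\ell}=\tfrac{1}{L}(1-z_j^2)^{-\ell}P_\ell(z_j)$, the net exponent $\beta-\zeta_j-\gamma_j=\beta-\alpha_j\ge 0$ in every coordinate, and Lemma \ref{lem: 2} for boundedness of the derivatives of $f_k$. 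Two points of comparison are worth recording. First, on the chain rule you are more careful than the paper: identity \eqref{eq: derivative of F} is a one-term formula that is literally exact only when every $\gamma_j\le 1$ (already $\partial_{z_j}^2 f(x_j(z_j))=f''(x_j)\,(x_j')^2+f'(x_j)\,x_j''$ has two terms), whereas your Fa\`a di Bruno bookkeeping --- all partition terms share the singular order $(1-z_j^2)^{-\gamma_j}$ because the block sizes sum to $\gamma_j$ --- is the defensible version of this step. Note, however, that these terms cannot be ``collected into the stated form'' \eqref{eq: exact mixed derivatives} with the single factor $D^{\bm{\gamma}}f_k$ either, since they involve lower-order derivatives $D^{\bm{b}}f_k$ with $\bm{b}\preceq\bm{\gamma}$ and different powers of $1/L$. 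What is true, and what both the boundedness conclusion here and Theorem \ref{thm: small beta} actually use, is that every summand is a bounded mixed derivative of $f_k$ times a polynomial times the factor $(1-z_j^2)^{\beta-\alpha_j}$ in each coordinate.

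Second, on the boundary you have put your finger on a step the paper dispatches in one sentence (``by defining the continuous extension of $D^{\bm{\alpha}}u_k$ using \eqref{eq: exact mixed derivatives}''). Membership in $F_d^r(\overline{\Omega})$ as defined in \eqref{eq: function space} requires continuity of $D^{\bm{\alpha}}\overline{u}_k$ up to $\partial\Omega$, and in the borderline case $\beta=\alpha_j=r$ --- precisely the case used in practice, cf.\ Theorem \ref{thm: small beta} with $\beta=1,2$ --- the net power of $(1-z_j^2)$ is zero, so one needs $D^{\bm{\gamma}}f_k(\psi^{-1}(\mathbf{z}))$ itself to converge as $z_j\to\pm 1$; boundedness and uniform continuity on $\mathbb{R}^d$ from Lemma \ref{lem: 2} do not imply existence of such limits at infinity. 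Your proposed remedy --- extracting the limits from the convolution representation $f_k=e^{-r\Delta t}V_{k+1}*P$, i.e.\ $D^{\bm{\gamma}}f_k=e^{-r\Delta t}V_{k+1}*D^{\bm{\gamma}}P$ together with dominated convergence and the directional limits of the put payoff --- is the natural way to close this, but you leave it unexecuted; the paper leaves it unexecuted as well, so your argument stops exactly where the paper's does rather than falling short of it. A cheap alternative that avoids the issue entirely is to assume $\beta\ge r+1$: then every summand carries a strictly positive power of $(1-z_j^2)$ in each degenerating coordinate, all mixed derivatives tend to zero at $\partial\Omega$, and the zero extension is automatically continuous.
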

\begin{proof}
First, for the multi-index $\bm{\alpha}$ with $|\bm{\alpha}| = 0$,
\begin{align*}
 D^{\bm{\alpha}} u_k(\mathbf{z}) = u_k(\mathbf{z}) = F_k(\mathbf{z})b(\mathbf{z}) = f_k(\psi^{-1}(\mathbf{z}))b(\mathbf{z})
 \end{align*}
is continuous and bounded in $\Omega$ due to the continuity and boundedness of $f_k$ in Lemma \ref{lem: 2}. Since $u_k$ approaches zero as $\mathbf{z}\to \partial\Omega$, we can define $\overline{u}_k|_{\partial \Omega} = 0$, and $\overline{u}_k|_{\Omega} = u_k$ for each $k=0,1,\dots, K-1$. Then $\overline{u}_k$ is continuous in the closure $\overline{\Omega}$. The following argument holds for all $k = 0,1,\dots, K-1$. Thus, we drop the subscript $k$ and denote $u:= u_k$, $F:= F_k$ and $f:= f_k$. For any multi-index $\bm{\alpha}\in \mathbb{N}_0^d$, Leibniz's rule implies
\begin{equation} \label{eq: product rule}
    D^{\bm{\alpha}}u = \sum_{\substack{\bm{\gamma} + \bm{\zeta} = \bm{\alpha}\\ \bm{\gamma} \preceq \bm{\alpha},\bm{\zeta} \preceq \bm{\alpha} }} \frac{\bm{\alpha}!}{\bm{\gamma}! \bm{\zeta}!} D^{\bm{\gamma}}F \cdot D^{\bm{\zeta}}b.
\end{equation}
For any fixed $r\in \mathbb{N}$, pick $\beta \ge r$. Consider a multi-index $\bm{\alpha}$ with $\alpha_i\leq r$, $i=1,\ldots,d$.
Let $b(\mathbf{z}) = \prod_{j=1}^d b_j(z_j)$ with $b_j(z_j) = (1-z_j^2)^\beta$. Then direct computation yields
\begin{equation} \label{eq: derivative of b}
    D^{\bm{\zeta}}b = \prod_{j=1}^d \frac{\mathrm{d}^{\zeta_j}b_j}{\mathrm{d} z_j^{\zeta_j}} \quad \mbox{with }   \frac{\mathrm{d}^i b_j}{\mathrm{d} z_j^i} := (1-z_j^2)^{\beta-i} Q_i(z_j)
\end{equation}
where $Q_i(z_j)$, $i=0,1,\ldots,r$, is a polynomial of degree $i$. Note that $F(\mathbf{z})
= f(\psi^{-1}(\mathbf{z}))$ depends only on $z_j$ through $x_j$ with $\mathbf{x} = \psi^{-1}(\mathbf{z})$.
Then we have
\begin{equation} \label{eq: derivative of F}
    D^{\bm{\gamma}}F = D^{\bm{\gamma}}\left(f(\psi^{-1}(\mathbf{z}))\right) = \frac{\partial^{|\bm{\gamma}|}f}{\prod_{j=1}^d \partial x_j^{\gamma_j}} \prod_{j=1:d, \gamma_j\ge 1}\frac{\mathrm{d}^{\gamma_j}x_j}{\mathrm{d} z_j^{\gamma_j}} \quad \mbox{with }   \frac{\mathrm{d}^\ell x_j}{\mathrm{d} z_j^\ell} := \frac{1}{L} (1-z_j^2)^{-\ell} P_\ell(z_j),
\end{equation}
where $P_{\ell}(z_j)$ is a polynomial of degree $\ell-1$ for each $\ell = 1,2,\dots, r$.
Combining the last three identities yields the assertion \eqref{eq: exact mixed derivatives}. Since $\beta - \zeta_j - \gamma_j = \beta - \alpha_j \ge 0$, by Lemma \ref{lem: 2}, we deduce that $D^{\bm{\alpha}}u_k$ is bounded. By defining the continuous extension of $D^{\bm{\alpha}}u_k$ using \eqref{eq: exact mixed derivatives}, we obtain that $\overline{u}_k \in F_d^r(\overline{\Omega})$ for $k=0,1,\dots, K-1$.
\end{proof}

To bound $\overline{u}_k$ in the $F_d^r(\overline{\Omega})$ norm, we need suitable estimates of polynomials $Q_i$ and $P_{\ell}$ in \eqref{eq: exact mixed derivatives}.
\begin{lem} \label{lem: polynomial estimates}
For fixed $\beta\ge r$ with $r\in \mathbb{N}$, $i = 1,2,\dots, r$ and $\ell = 1,2,\dots,r$, the polynomials $Q_i$ defined in \eqref{eq: derivative of b} and $P_{\ell}$ defined in \eqref{eq: derivative of F} satisfy the following estimates
\begin{equation} \label{eq: estimate Q-P}
    \|Q_i\|_{L^\infty([-1,1])} \le \prod_{n=0}^{i-1} \left( n^2 + 2(\beta - n) \right)
\quad \mbox{and}\quad
    \|P_\ell\|_{L^\infty([-1,1])} \le \prod_{n=0}^{\ell-1} (n^2+1).
\end{equation}
\end{lem}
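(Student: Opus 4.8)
The plan is to derive first-order recurrences for the two polynomial families $\{Q_i\}_{i\ge 0}$ and $\{P_\ell\}_{\ell\ge 1}$ by differentiating their defining identities \eqref{eq: derivative of b} and \eqref{eq: derivative of F} once more, to convert these into recurrences for the $L^\infty([-1,1])$ norms using the classical Markov brothers' inequality, and finally to telescope from the base cases $Q_0 = 1$ and $P_1 = 1$. The estimates \eqref{eq: estimate Q-P} will then drop out as finite products.

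First I would establish the recurrences. Writing $w(z) = 1-z^2$ so that $w' = -2z$, differentiating the identity $\frac{\mathrm{d}^i b_j}{\mathrm{d}z_j^i} = w^{\beta-i}Q_i$ once more and factoring out $w^{\beta-(i+1)}$ gives
\[
Q_{i+1}(z) = (1-z^2)Q_i'(z) - 2(\beta-i)\,z\,Q_i(z), \qquad Q_0 = 1.
\]
An entirely analogous computation, starting from $\frac{\mathrm{d}^\ell x_j}{\mathrm{d}z_j^\ell} = \tfrac{1}{L}w^{-\ell}P_\ell$ and using that $x_j = L^{-1}\arctanh(z_j)$, yields
\[
P_{\ell+1}(z) = (1-z^2)P_\ell'(z) + 2\ell\,z\,P_\ell(z), \qquad P_1 = 1.
\]
These two steps are routine differentiations, and one checks in passing that $\deg Q_i = i$ and $\deg P_\ell = \ell-1$, consistent with the statement of the lemma.

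Second, I would pass to sup-norms on $[-1,1]$. There one has $|z|\le 1$ and $|1-z^2|\le 1$, and since $\beta \ge r \ge i$ the coefficient $2(\beta-i)$ is nonnegative, so the only term requiring a genuine estimate is the derivative $Q_i'$ (resp. $P_\ell'$). This is exactly where the Markov brothers' inequality $\|p'\|_{L^\infty([-1,1])} \le n^2\|p\|_{L^\infty([-1,1])}$ for $\deg p \le n$ enters: applying it with $\deg Q_i = i$ and $\deg P_\ell = \ell-1$ and using the triangle inequality on the recurrences gives
\[
\|Q_{i+1}\|_{\infty} \le \big(i^2 + 2(\beta-i)\big)\|Q_i\|_{\infty}, \qquad \|P_{\ell+1}\|_{\infty} \le \big((\ell-1)^2 + 2\ell\big)\|P_\ell\|_{\infty} = (\ell^2+1)\|P_\ell\|_{\infty},
\]
where the last equality uses $(\ell-1)^2 + 2\ell = \ell^2 + 1$.

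Third and last, I would telescope. Starting from $\|Q_0\|_\infty = 1$ and $\|P_1\|_\infty = 1$, the one-step bounds above multiply out to $\|Q_i\|_\infty \le \prod_{n=0}^{i-1}(n^2 + 2(\beta-n))$ and $\|P_\ell\|_\infty \le \prod_{n=1}^{\ell-1}(n^2+1) = \prod_{n=0}^{\ell-1}(n^2+1)$, the final identity holding because the $n=0$ factor equals $1$. This is precisely \eqref{eq: estimate Q-P}. The only real obstacle is conceptual rather than computational: one must recognize that the stray derivative terms $Q_i'$ and $P_\ell'$ have to be controlled by a sharp polynomial inequality (Markov's) rather than a crude bound, and then keep the degree bookkeeping straight so that the factors $i^2 + 2(\beta-i)$ and $\ell^2+1$ collapse exactly into the claimed products.
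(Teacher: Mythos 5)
Your proof is correct and follows essentially the same route as the paper's: differentiate the defining identities once to get first-order recurrences for $Q_i$ and $P_\ell$, bound the derivative terms via the Markov brothers' inequality together with the triangle inequality, and telescope from $Q_0=1$, $P_1=1$. The only difference is a cosmetic index shift (you write $Q_{i+1}$ in terms of $Q_i$ where the paper writes $Q_i$ in terms of $Q_{i-1}$), and your factor $(\ell-1)^2+2\ell=\ell^2+1$ agrees with the paper's $(\ell-1)^2+1$ after that shift.
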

\begin{proof}
By \eqref{eq: derivative of b}, $\|Q_0\|_{L^\infty([-1,1])} = 1$. Using the identity
$\frac{\mathrm{d}}{\mathrm{d} z_j} \left( \frac{\mathrm{d}^{i-1}b_j}{\mathrm{d}z_j^{i-1}} \right) = \frac{\mathrm{d}^i b_j}{\mathrm{d} z_j^i}$, for $i>1$,
and the defining identity in \eqref{eq: derivative of b}, we obtain
\begin{equation*}
\frac{\mathrm{d}}{\mathrm{d} z_j} \left( (1-z_j^2)^{\beta-i+1}Q_{i-1}(z_j) \right) = (1-z_j^2)^{\beta-i}Q_i(z_j).
\end{equation*}
Dividing both sides by $(1-z_j^2)^{\beta-i}$ after taking derivative over the left hand side, we obtain
\begin{equation*}
(\beta-i+1)(-2z_j)Q_{i-1}(z_j) + (1-z_j^2) Q_{i-1}'(z_j) = Q_i(z_j).
\end{equation*}
By the triangular inequality, we derive
\begin{equation}\label{eq:111}
        \|Q_i\|_{L^\infty([-1,1])} \le 2(\beta-i+1) \|Q_{i-1}\|_{L^\infty([-1,1])} + \|Q_{i-1}'\|_{L^\infty([-1,1])}.
    \end{equation}
Since $Q_{i-1}$ is a polynomial of degree $i-1$ defined on $[-1,1]$, a direct application of the Markov brothers' inequality \cite[p. 300]{achieser2013theory} yields
    \begin{equation*}
        \|Q_{i-1}'\|_{L^\infty([-1,1])} \le (i-1)^2 \|Q_{i-1}\|_{L^\infty([-1,1])}.
    \end{equation*}
Plugging this estimate into \eqref{eq:111} leads to the recurrence relation
    \begin{equation} \label{eq: recursion Q}
        \|Q_i\|_{L^\infty([-1,1])} \le \left((i-1)^2 + 2(\beta-(i-1)) \right) \|Q_{i-1}\|_{L^\infty([-1,1])}.
    \end{equation}
Upon noting $\|Q_0\|_{L^\infty([-1,1])} = 1$, we derive the desired estimate on $Q_i$. The estimate on $P_i$ follows similarly.
For $\ell = 1$, $\|P_1\|_{L^\infty([-1,1])} = 1$. For $\ell>1$, using \eqref{eq: derivative of F}, we obtain
\begin{equation*}
\frac{\mathrm{d}}{\mathrm{d} z_j} \left( \frac{\mathrm{d}^{\ell-1}x_j}{\mathrm{d} z_j^{\ell-1}} \right) = \frac{\mathrm{d}}{\mathrm{d} z_j} \left( \frac{1}{L}(1-z_j^2)^{-\ell+1}P_{\ell-1}(z_j) \right) = \frac{1}{L} (1-z_j^2)^{-\ell} P_\ell(z_j) = \frac{\mathrm{d}^\ell x_j}{\mathrm{d} z_j^{\ell}}.
\end{equation*}
This implies
\begin{equation*}
2(\ell-1)z_j P_{\ell-1}(z_j) + (1-z_j^2) P_{\ell-1}'(z_j) = P_\ell(z_j).
\end{equation*}
    Since $P_{\ell-1}$ is a polynomial of degree $\ell-2$, by the triangular inequality and the Markov brothers' inequality, we obtain the recurrence relation
    \begin{equation} \label{eq: recursion P}
        \|P_\ell\|_{L^\infty([-1,1])} \le \left( (\ell-1)^2 + 1 \right) \|P_{\ell-1}\|_{L^\infty([-1,1])}.
    \end{equation}
Together with the identity $\|P_1\|_{L^\infty([-1,1])} = 1$, we prove the desired assertion on $P_\ell$.
\end{proof}

We now estimate the norms $\|D^{\bm{\alpha}}u_k\|_{L^\infty(\overline{\Omega})}$ in terms of suitable mixed norms of $f$. Here,
\begin{align*}
\|f\|_{mix,r} := \max \left\{ \left\|\frac{\partial^{|\bm{\alpha}|} f}{\prod_{j=1}^d \partial x_j^{\alpha_j}}\right\|_{L^\infty(\R^d)} : \mathbf{x}\in \R^d, \bm{\alpha}\in \mathbb{N}_0^d\text{ with } \alpha_j \le r \right\}.
\end{align*}

\begin{thm}[Upper bounds on $\|\overline{u}_k \|_{F_d^1(\overline{\Omega})}$ and $\|\overline{u}_k \|_{F_d^2(\overline{\Omega})}$ for  $k=0,1,\dots,K-1$]\label{thm: small beta}
Let $g$ be the payoff of a put option, and $u_k: \Omega \to \R$ the function defined in \eqref{eq: DP bubble} with $u_k(\mathbf{z}) = f_k(\psi^{-1}(\mathbf{z}))b(\mathbf{z})$. Let $b(\cdot)$ be the bubble function of the form \eqref{eq: bubble function} and $\psi^{-1}$ the mapping between unbounded and bounded domains defined in \eqref{eq: mapping for unbounded}. Then for $k\in \{0,1,\dots,K-1\}$, if $\beta = 1$ in \eqref{eq: bubble function}, there holds
\begin{equation*}
\|\overline{u}_k\|_{F_d^1(\overline{\Omega})} \le \left(2+ \frac{1}{L} \right)^{d}\|f_k\|_{mix,1}.
\end{equation*}
If $\beta = 2$ in \eqref{eq: bubble function}, then
\begin{equation*}
\|\overline{u}_k\|_{F_d^2(\overline{\Omega})} \le \left(24+\frac{12}{L}\right)^d \|f_k\|_{mix,2}.
\end{equation*}
\end{thm}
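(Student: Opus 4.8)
The plan is to read the bounds directly off the explicit mixed-derivative formula \eqref{eq: exact mixed derivatives} from Theorem \ref{thm: bounded-mixed}, combined with the polynomial estimates \eqref{eq: estimate Q-P} of Lemma \ref{lem: polynomial estimates}. Fix $r\in\{1,2\}$ and take $\beta=r$. For a multi-index $\bm{\alpha}$ with $\alpha_j\le r$ for every $j$, I would start from \eqref{eq: exact mixed derivatives}, bound each coefficient $|D^{\bm{\gamma}}f_k|$ uniformly by $\|f_k\|_{mix,r}$ (legitimate since $\gamma_j\le\alpha_j\le r$, using the smoothness from Lemma \ref{lem: 2}), and factor this quantity out of the sum. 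What remains is a finite sum over decompositions $\bm{\gamma}+\bm{\zeta}=\bm{\alpha}$ of a product of univariate functions of the $z_j$.

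The key structural observation is that this residual sum factorizes over the coordinates $j$. Indeed, the multinomial weight splits as $\frac{\bm{\alpha}!}{\bm{\gamma}!\bm{\zeta}!}=\prod_{j=1}^d\binom{\alpha_j}{\gamma_j}$, the constraint $\bm{\gamma}+\bm{\zeta}=\bm{\alpha}$ decouples into $\gamma_j+\zeta_j=\alpha_j$ coordinatewise, and the $L^\infty(\overline{\Omega})$ norm of a product of univariate factors equals the product of the one-dimensional $L^\infty([-1,1])$ norms. Hence I would reduce the $d$-dimensional estimate to a product of one-dimensional brackets
$$ B_r(\alpha_j) := \sum_{\gamma_j+\zeta_j=\alpha_j} \binom{\alpha_j}{\gamma_j}\, b_j(\gamma_j,\zeta_j), $$
where $b_j(\gamma_j,\zeta_j)$ is the supremum over $z_j\in[-1,1]$ of the corresponding univariate factor in \eqref{eq: exact mixed derivatives}. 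Because $\beta-\zeta_j\ge 0$ when $\gamma_j=0$, and $\beta-\zeta_j-\gamma_j=\beta-\alpha_j\ge 0$ otherwise, every power $(1-z_j^2)^{(\cdot)}$ is bounded by $1$ on $[-1,1]$, so $b_j(\gamma_j,\zeta_j)$ is controlled purely by $\|Q_{\zeta_j}\|_{L^\infty}$ and $\|P_{\gamma_j}\|_{L^\infty}$ (with an extra factor $\tfrac{1}{L}$ when $\gamma_j\ge 1$), which are in turn bounded via Lemma \ref{lem: polynomial estimates}.

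It then remains to evaluate the finitely many brackets. For $\beta=r=1$ the only constants needed are $\|Q_0\|\le 1$, $\|Q_1\|\le 2$, $\|P_1\|\le 1$, giving $B_1(0)\le 1$ and $B_1(1)\le 2+\tfrac{1}{L}$; maximizing $\prod_j B_1(\alpha_j)$ over admissible $\bm{\alpha}$ (attained at $\alpha_j=1$ for all $j$) yields $\|\overline{u}_k\|_{F_d^1(\overline{\Omega})}\le (2+\tfrac{1}{L})^d\|f_k\|_{mix,1}$. For $\beta=r=2$ I would use $\|Q_1\|\le 4$, $\|Q_2\|\le 12$, $\|P_2\|\le 2$, compute $B_2(1)\le 4+\tfrac{1}{L}$ and $B_2(2)\le 12+\tfrac{10}{L}$, and then crudely overestimate the latter by $24+\tfrac{12}{L}$ to match the stated constant; the product is again maximized at $\alpha_j=2$ for all $j$, giving the claimed bound.

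I do not anticipate any genuine difficulty: the substance is the combinatorial bookkeeping of enumerating the few $(\gamma_j,\zeta_j)$ splittings and inserting the constants from Lemma \ref{lem: polynomial estimates}. The one point requiring care is justifying the coordinatewise factorization of the sup-norm after the uniform bound on $D^{\bm{\gamma}}f_k$ is applied, and verifying that every exponent in $(1-z_j^2)^{(\cdot)}$ is nonnegative so that the $\le 1$ estimate is valid—both of which follow at once from $\beta\ge\alpha_j$.
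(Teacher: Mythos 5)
Your proposal is correct and follows essentially the same route as the paper: both read the bounds off the exact mixed-derivative formula \eqref{eq: exact mixed derivatives} combined with Lemma \ref{lem: polynomial estimates}, and exploit the coordinatewise factorization of the sum over $\bm{\gamma}+\bm{\zeta}=\bm{\alpha}$ (with all exponents of $(1-z_j^2)$ nonnegative since $\beta\ge\alpha_j$). The only difference is bookkeeping: you keep the binomial weights inside the one-dimensional brackets, obtaining the sharper intermediate constant $\left(12+\tfrac{10}{L}\right)^d$ for $\beta=2$ before relaxing it to $\left(24+\tfrac{12}{L}\right)^d$, whereas the paper bounds the multinomial coefficient by $2^d$ and applies a trinomial expansion to land directly on the stated constant.
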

\begin{proof}
Combining \eqref{eq: exact mixed derivatives} with \eqref{eq: estimate Q-P} leads to
\begin{align} \label{eq: estimate of mixed derivatives}
    \|D^{\bm{\alpha}} \overline{u}_k\|_{L^\infty(\overline{\Omega})} &\le \sum_{\substack{\bm{\gamma} + \bm{\zeta} = \bm{\alpha}\\ \bm{\gamma} \preceq \bm{\alpha},\bm{\zeta} \preceq \bm{\alpha} }} \frac{\bm{\alpha}!}{\bm{\gamma}! \bm{\zeta}!} \|D^{\bm{\gamma}}f_k\|_{L^\infty(\mathbb{R}^d)} \prod_{j=1:d, \gamma_j = 0} \|Q_{\zeta_j}\|_{L^\infty([-1,1])}\\
    &\times\prod_{j=1:d, \gamma_j\ge 1} \left(\frac{1}{L} \|Q_{\zeta_j}\|_{L^\infty([-1,1])} \|P_{\gamma_j}\|_{L^\infty([-1,1])} \right).\nonumber
\end{align}
It follows from Lemma \ref{lem: polynomial estimates} that
\begin{align*}
        \|Q_0\|_{L^\infty([-1,1])} = 1, \quad & \|Q_1\|_{L^\infty([-1,1])} \le 2\beta, \quad \|Q_2\|_{L^\infty([-1,1])} \le (2\beta -1)2\beta,\\  \|P_1\|_{L^\infty([-1,1])} = 1, \quad &\|P_2\|_{L^\infty([-1,1])} \le 2.
\end{align*}
Next, we deduce the following estimates from \eqref{eq: estimate of mixed derivatives}.
If $\beta = 1$, consider the multi-index $\bm{\alpha}$ with $\alpha_j\in \{0,1\}$ for $j=1,2,\dots, d$, then
\begin{equation*}
\begin{aligned}
    \|D^{\bm\alpha}\overline{u}_k\|_{L^\infty(\overline{\Omega})} &\le  \sum_{\substack{\bm{\gamma} + \bm{\zeta} = \bm{\alpha}\\ \bm{\gamma} \preceq \bm{\alpha},\bm{\zeta} \preceq \bm{\alpha} }} \frac{\bm{\alpha}!}{\bm{\gamma}! \bm{\zeta}!} \|f_k\|_{mix,1} \prod_{j=1:d, \gamma_j = 0} 2\beta \prod_{j=1:d, \gamma_j\ge 1} \frac{1}{L} \\
    &=\sum_{\substack{\bm{\gamma} + \bm{\zeta} = \bm{\alpha}\\ \bm{\gamma} \preceq \bm{\alpha},\bm{\zeta} \preceq \bm{\alpha} }}  2^{|\bm{\zeta}|} \left(\frac{1}{L}\right)^{|\bm{\gamma}|} \|f_k\|_{mix,1}  = \left(2+ \frac{1}{L} \right)^{|\bm{\alpha}|}\|f_k\|_{mix,1}\\
    &\leq \left(2+ \frac{1}{L} \right)^{d}\|f_k\|_{mix,1}.
\end{aligned}
\end{equation*}
If $\beta = 2$, consider the multi-index $\bm{\alpha}$ with $\alpha_j\in \{0,1,2\}$ for $j=1,2,\dots,d$,
an application of the trinomial expansion implies
\begin{equation*}
\begin{split}
    \|D^{\alpha}\overline{u}_k\|_{L^\infty(\overline{\Omega})} &\le \|f_k\|_{mix,2} \sum_{\substack{\bm{\gamma} + \bm{\zeta} = \bm{\alpha}\\ \bm{\gamma} \preceq \bm{\alpha},\bm{\zeta} \preceq \bm{\alpha} }} \frac{\bm{\alpha}!}{\bm{\gamma}! \bm{\zeta}!} \prod_{j=1:d, \gamma_j = 0} (2\beta-1)2\beta \prod_{j=1:d, \gamma_j=1} \frac{1}{L}(2\beta) \prod_{j=1:d, \gamma_j=2} \frac{1}{L}\cdot 2 \\
    &\le 2^d \left(2\beta(2\beta-1)+\frac{2\beta}{L}+\frac{2}{L}\right)^d \|f_k\|_{mix,2} =\left(24+\frac{12}{L}\right)^d \|f_k\|_{mix,2}.
\end{split}
\end{equation*}
This completes the proof of the theorem.
\end{proof}

In Theorem \ref{thm: small beta}, we only consider the cases $\beta = 1$ and $\beta=2$, and both can overcome the curse of dimensionality by means of SGPI while maintaining a relatively small upper bound of the functional norm. Clearly, higher order mixed derivatives of the interpolation function $u_k$ have larger upper bounds.

\section{Numerical experiments}\label{sec:numerical}
In this section, we illustrate the efficiency and robustness of the proposed quadrature and sparse grid interpolation scheme, i.e., Algorithm \ref{alg: SGSG}, for pricing high-dimensional American options. We present pricing results up to dimension $16$. The accuracy of the option price $\hat V$ obtained by Algorithm \ref{alg: SGSG} is measured in the relative error defined by
$$ e= |\hat V - V^\dag|/{V^\dag}, $$
where $V^\dag$ is the reference price, either taken from literature or computed to meet a certain tolerance. The results show that the relative errors decay geometrically as the number of interpolation points increase, and the
convergence rate is almost independent of the dimension $d$. The comparison of various
quadrature methods is also included. The implementation of sparse grids is based on the Sparse Grids MATLAB Kit, a MATLAB toolbox for high-dimensional quadrature and interpolation \cite{piazzola.tamellini:SGK}. The computations were performed by MATLAB R2022b with 32 CPU cores (with 4GB memory per core) using research computing facilities offered by Information Technology Services, The University of Hong Kong. The codes for the numerical experiments can be founded in \url{https://github.com/jiefeiy/multi-asset-American-option/tree/main}.

\subsection{American basket option pricing up to dimension 16}
The examples are taken from \cite{kovalov2007pricing}, where
pricing American options up to 6 assets by the FEM was investigated. For each $d$-dimensional problem, the setting of these examples are $S^i_0 = \kappa = 100$, $T = 0.25$, $r = 0.03$, $\delta_i = 0$, $\sigma_i = 0.2$, $P = (\rho_{ij})_{d\times d}$ with $\rho_{ij} = 0.5$ for $i\ne j$. The prices of arithmetic basket options with $d = 2,3,\dots, 12$ underlying assets are listed in Table \ref{tab: SGGK price of arithmBaskPut}, where the last column gives the reference price $V^\dag_{\rm Amer}$ of American options reported in \cite{kovalov2007pricing}, where the relative error is $0.758\%$ for pricing a $6$-d American geometric put option. 

To further illustrate the efficiency of the algorithm in high dimensions, we consider pricing the geometric basket
put options as benchmarks, which can be reduced to a one-dimensional problem. Thus, highly accurate prices
are available using one-dimensional quadrature and interpolation scheme. Indeed, the price
of the $d$-dimensional problem equals that of the one-dimensional American put option with initial price, volatility,
and dividend yield given by
\begin{equation*}
    \hat{S}_0 = \big(\prod_{i=1}^d S_0^i\big)^{1/d}, \quad \hat{\sigma} = \frac{1}{d}\sqrt{\sum_{i,j}\sigma_i \sigma_j \rho_{ij}}, \quad \hat{\delta} = \frac{1}{d}\sum_{i=1}^d \big( \delta_i + \frac{\sigma_i^2}{2}\big) - \frac{\hat{\sigma}^2}{2},
\end{equation*}
respectively. The prices of geometric basket options with $d = 2,3,\dots, 16$ underlying assets are listed
in Table \ref{tab: SGGK price of geoBaskPut}, where the reference Bermudan prices $V^\dag_{\rm Ber}$ with
$50$ times steps with accuracy up to $10^{-6}$ are calculated using one-dimensional quadrature and interpolation scheme. The last column of Table \ref{tab: SGGK price of geoBaskPut} are the reference prices $V^\dag_{\rm Amer}$ of American options reported in \cite{kovalov2007pricing} priced by the reduced one-dimensional problem. 

\begin{table}[hbt!]
\centering
\setlength{\tabcolsep}{5pt}
\begin{threeparttable}
\caption{The prices for the arithmetic basket put option on $d$ assets with $\beta = 1$, $L = 2$, and $K = 50$,
 using sparse grid quadrature with level $4$ Genz-Keister knots. The relative errors in the brackets are compared
 with the American price $V^\dag_{\rm Amer}$.}
\label{tab: SGGK price of arithmBaskPut}\begin{tabular}{cccccccc}
    \toprule
    $d$ & \multicolumn{5}{|c|}{Sparse grid interpolation level $L_I$} & $V_{\rm Amer}^\dag$\\
    \midrule
      & 3 & 4 & 5 & 6 & 7 & \\
    \midrule
    2 & 2.9193 & 3.1397 & 3.1330 & 3.1269 & 3.1388 & 3.13955 \\
      & (7.02e-2) & (3.52e-5) & (2.07e-3) & (4.03e-3) & (2.30e-4) & \\
    \hline
    3 & 2.8649 & 2.9533 & 2.9300 & 2.9304 & 2.9463 & 2.94454 \\
      & (2.71e-2) & (2.96e-3) & (4.95e-3) & (4.81e-3) & (5.83e-4) & \\
    \hline
    4 & 2.8429 & 2.8547 & 2.8232 & 2.8311 &   & 2.84019 \\
      & (9.42e-4) & (5.11e-3) & (5.98e-3) & (3.22e-3) &   &   \\
    \hline
    5 & 2.8271 & 2.7906 & 2.7601 & 2.7710 &   & 2.77193 \\
      & (1.99e-2) & (6.74e-3) & (4.25e-3) & (3.26e-4) &   &   \\
    \hline
    6 & 2.8129 & 2.7455 & 2.7191 & 2.7319 &   & 2.71838 \\
      & (3.48e-2) & (9.96e-3) & (2.60e-4) & (4.97e-3) &   &   \\
    \hline
    7 & 2.7988 & 2.7140 & 2.6913 & 2.7052 &   &  \\
    \hline
    8 & 2.7841 & 2.6908 & 2.6718 &        &   &  \\
    \hline
    9 & 2.7720 & 2.6690 & 2.6553 &        &   &  \\
    \hline
    10 & 2.7599 & 2.6498 & 2.6428 &       &   &   \\
    \hline
    11 & 2.7472 & 2.6331 & 2.6334 &       &   &    \\
    \hline
    12 & 2.7345 & 2.6210 & 2.6256 &       &   &    \\
    \bottomrule
\end{tabular}
\end{threeparttable}
\end{table}

\begin{table}[hbt!]
    \centering
    \setlength{\tabcolsep}{5pt}
    \begin{threeparttable}
    \caption[]{The prices for the geometric basket put option on $d$ assets with $\beta = 1$, $L = 2$, and $K = 50$, using sparse grid quadrature with level $4$ Genz-Keister knots. The relative errors in the brackets are compared with $50$-times exercisable Bermudan price $V^\dag_{\rm Ber}$.}     \label{tab: SGGK price of geoBaskPut}
    \begin{tabular}{cccccccc}
    \toprule
    $d$ & \multicolumn{5}{|c|}{Sparse grid interpolation level $L_I$} & $V^\dag_{\rm Ber}$ & $V^\dag_{\rm Amer}$\\
    \midrule
     & 3 & 4 & 5 & 6 & 7 & &  \\
    \midrule
    2 & 2.9489 & 3.1880 & 3.1880 & 3.1839 & 3.1831 & 3.18310  & 3.18469\\
      & (7.36e-2) & (1.53e-3) & (1.55e-3) & (2.51e-4) & (1.43e-5) &  & \\
    \hline
    3 & 2.9130 & 3.0226 & 3.0060 & 3.0029 & 3.0030 & 3.00299  & 3.00448\\
      & (3.00e-2) & (6.53e-3) & (9.96e-4) & (1.67e-5) & (8.59e-6) &  & \\
    \hline
    4 & 2.9028 & 2.9314 & 2.9103 & 2.9088 &    & 2.90836 & 2.90980\\
      & (1.91e-3) & (7.94e-3) & (6.75e-4) & (1.42e-4) &  &  & \\
    \hline
    5 & 2.8963 & 2.8716 & 2.8514 & 2.8505 &    & 2.84994 & 2.85135\\
      & (1.63e-2) & (7.60e-3) & (5.03e-4) & (2.00e-4) &  &  & \\
    \hline
    6 & 2.8889 & 2.8283 & 2.8110 & 2.8107 &    & 2.81026  & 2.81165\\
      & (2.80e-2) & (6.40e-3) & (2.74e-4) & (1.65e-4) &  &  & \\
    \hline
    7 & 2.8810 & 2.7955 & 2.7817 & 2.7817 &    & 2.78155  & \\
      & (3.57e-2) & (5.03e-3) & (4.95e-5) & (6.93e-5) &  &  & \\
    \hline
    8 & 2.8725 & 2.7718 & 2.7599 &        &    & 2.75980  & \\
      & (4.08e-2) & (4.34e-3) & (3.47e-5) &  &  &  & \\
    \hline
    9 & 2.8631 & 2.7505 & 2.7428 &        &    & 2.74275  & \\
      & (4.39e-2) & (2.84e-3) & (3.37e-5) &  &  &  & \\
    \hline
    10 & 2.8525 & 2.7320 & 2.7292 &       &    & 2.72904  & \\
       & (4.52e-2) & (1.08e-3) & (6.66e-5) &  &  &  & \\
    \hline
    11 & 2.8412 & 2.7154 & 2.7180 &       &    & 2.71776  & \\
       & (4.54e-2) & (8.79e-4) & (8.83e-5) &  &  &  & \\
    \hline
    12 & 2.8294 & 2.7000 & 2.7085 &       &    & 2.70832  & \\
       & (4.47e-2) & (3.09e-3) & (7.74e-5) &  &  &  & \\
    \hline
    13 & 2.8175 & 2.6855 &  &       &    & 2.70031  & \\
       & (4.34e-2) & (5.47e-3) &  &  &  &  & \\
    \hline
    14 & 2.8043 & 2.6720 &  &       &    & 2.69342  & \\
       & (4.12e-2) & (7.95e-3) &  &  &  &  & \\
    \hline
    15 & 2.7910 & 2.6597 &  &       &    & 2.68743  & \\
       & (3.85e-2) & (1.03e-2) &  &  &  &  & \\
    \hline
    16 & 2.7784 & 2.6502 &  &       &    & 2.68218  & \\
       & (3.59e-2) & (1.19e-2) &  &  &  &  & \\
    \bottomrule
    \end{tabular}
    \end{threeparttable}
\end{table}

\subsection{Convergence of interpolation for Bermudan options}
To verify the convergence rate of the SGPI, we consider $50$-times exercisable Bermudan basket put option on the geometric
average of $d$ assets. To avoid the influence of quadrature errors, sparse grid quadrature with level $4$ Genz-Keister knots
is applied to ensure the small approximation errors. Fig. \ref{fig: fig_conv_interp} shows the convergence for different dimension $d$.
These plots show that for a fixed number of inner sparse grids $N$, the relative error do increase with the dimension $d$, but the convergence rate is nearly
independent of the dimension, confirming the theoretical prediction in Section \ref{sec:analysis}.

\begin{figure}[H]
    \centering
    \includegraphics[width=.4\textwidth]{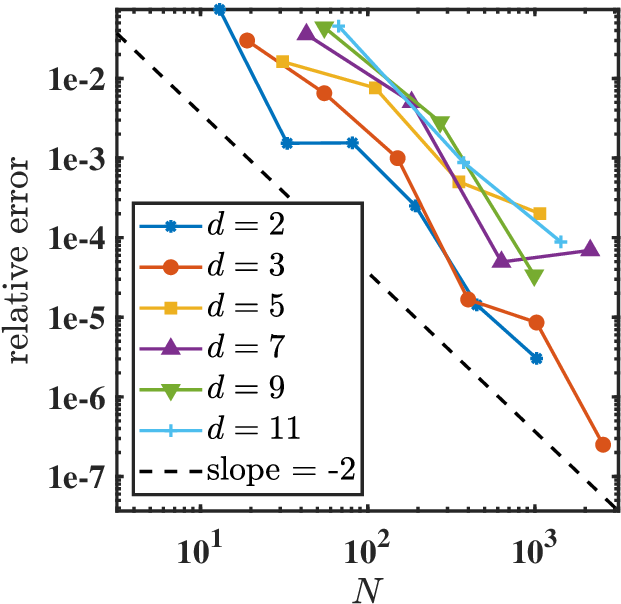}
    \caption{The relative error w.r.t the number of inner sparse grid interpolation points $N$ in dimension $d$ with $\beta = 1$, $L = 2$, and $K = 50$.}
    \label{fig: fig_conv_interp}
\end{figure}

\subsection{Comparison of quadrature}
Now we showcase the performance of Algorithm \ref{alg: SGSG} with different quadrature methods, to demonstrate the flexibility of high-dimensional quadrature rules. Since the pointwise evaluations on the inner sparse grids for interpolation are obtained via quadrature methods, cf. Section \ref{subsec:algorithm}, we present the error of the pricing with three kinds of sparse grid quadrature, the random quasi-Monte Carlo (RQMC) method with scramble Sobol sequence, and the state-of-art preintegration strategy for the integrand with 'kinks' (discontinuity of gradient).

\noindent\textbf{Sparse grid quadrature}: We first show the relative errors of pricing using \eqref{eq: SG quadrature} in Fig. \ref{fig:quad-err}b, for three types of sparse grids, i.e., Gauss-Hermite, Genz-Keister, and normal Leja points for integration with respect to the Gaussian density. The theoretical convergence of the sparse grid quadrature are limited to functions with bounded mixed derivatives, which is not satisfied by $v_{k+1}^{\mathbf{z}}(\cdot)$ defined in \eqref{eq: def of integrand}. Nonetheless, the success of sparse grid quadrature for computing risk-neutral expectations has been observed in the literatures \cite{bungartz2003multivariate, gerstner2007sparse, holtz2010sparse}. Fig. \ref{fig:quad-err}a shows the $L^\infty(\overline{\Omega})$-error of approximating $F_{K-1}$ by sparse grid quadrature, where the exact values correspond to the price of European options with expiration time $\Delta t$. We observe that the quadrature errors in Fig. \ref{fig:quad-err}a seems much larger than the relative errors shown in Fig. \ref{fig:quad-err}b, which seems impausible at the first glance since the latter is poluted by many errors including the former. We find that the quadrature errors are large only near the free interface, which is a $(d-1)$-dimensional manifold in the $d$-dimensional problem, and hence do not result in a heavy impact on the relative errors depicted in Fig. \ref{fig:quad-err}b.

\begin{figure}[H]
    \centering
    \begin{subfigure}[b]{.4\textwidth}
        \centering
        \includegraphics[width = \textwidth]{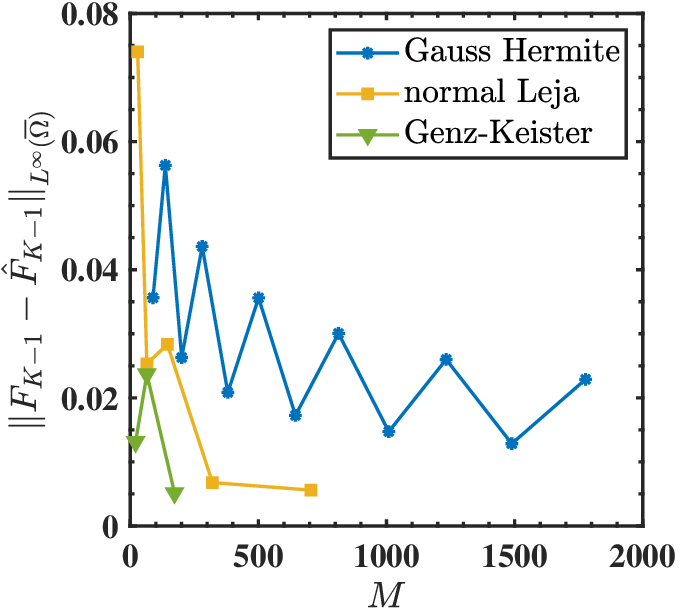}
        \caption{}
        \label{fig: fig_err_quad_a}
    \end{subfigure}
    ~
    \begin{subfigure}[b]{.4\textwidth}
        \centering
        \includegraphics[width = .95\textwidth]{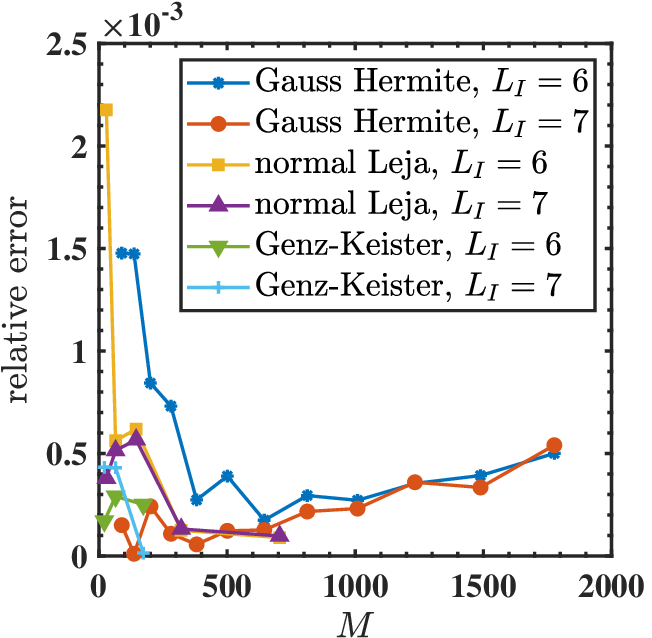}
        \caption{}
        \label{fig: fig_err_quad_b}
    \end{subfigure}  
    \caption{(a) The $L^\infty(\overline{\Omega})$-error w.r.t. the number of quadrature points $M$ for approximating $F_{K-1}$ by sparse grid quadrature rules in dimension $2$. (b) The relative error for pricing the Bermudan geometric basket put option in dimension $2$ with $\beta = 1$, $L = 2$, and $K = 50$.}
    \label{fig:quad-err}
\end{figure}
\begin{figure}[H]
    \centering
    \includegraphics[width=.4\textwidth]{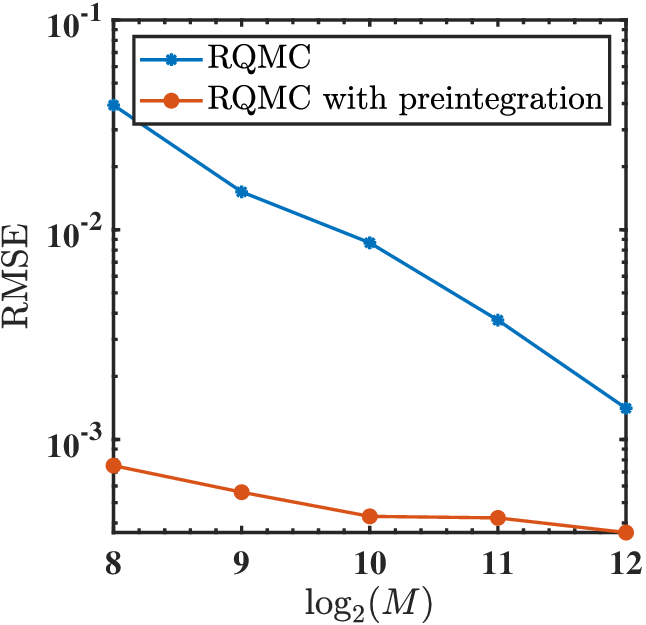}
    \caption{The RMSE w.r.t. the number of scrambled Sobol points $M$ for pricing the 5-d Bermudan geometric basket put option with $L_I = 6$, $\beta = 1$, $L = 2$, and $K = 50$.}
    \label{fig: RMSE preint}
\end{figure}
\noindent\textbf{RQMC and RQMC with preintegration}: To show the convergence with respect to the number of quadrature points $M$, we use randomized quasi-Monte Carlo (RQMC) with the scramble Sobol sequence for quadrature. QMC and RQMC have been widely applied to option pricing problems for computing high-dimensional integrals \cite{giles2008quasi,joy1996quasi,l2009quasi}. The max function in \eqref{eq: def of integrand} introduces a 'kink', which decreases the efficiency of sparse grid quadrature or QMC. For functions with 'kinks', the preintegration strategy or conditional sampling are developed \cite{griewank2018high, liu2023preintegration}. Fig. \ref{fig: RMSE preint} shows the root mean square error (RMSE) of Bermudan option pricing with respect to the number of quadrature points $M$ in a $5$-d problem, where we use $20$ independent replicates to estimate RMSE by
$\text{RMSE} = \sqrt{\frac{1}{20}\sum_{i=1}^{20} ( \hat V^{(i)} - V^\dag )^2}$.

\subsection{Robustness}
To test the robustness of Algorithm \ref{alg: SGSG}, we repeat the experiments for various values of the parameter $\beta$ occurred in the definition of the bubble function \eqref{eq: bubble function}, the scale parameter $L$ introduced in the scaled $\tanh$ map \eqref{eq: mapping for unbounded}, and the number of time steps $K$. The corresponding results are shown in Fig. \ref{fig: fig_stable_beta}, Fig. \ref{fig: fig_stable_L}, and Fig. \ref{fig: fig_stable_K}. We test with the example of pricing Bermudan or American geometric basket put options, where the prices are listed in Table \ref{tab: SGGK price of geoBaskPut}.

Fig. \ref{fig: fig_stable_beta} shows the convergence of relative errors for $\beta = 1,2,3,4,5$. Theoretically we have only provided the upper bounds of $\|\overline{u}_k\|_{F_d^1(\overline{\Omega})}$ and $\|\overline{u}_k\|_{F_d^2(\overline{\Omega})}$ in Theorem \ref{thm: small beta}. The relative errors with respect to the chosen scale parameter $L$ are displayed in Fig. \ref{fig: fig_stable_L}. The smallest relative error is observed for $L = 3.5$. In practice, as mentioned in Section \ref{subsec:unbounded-bounded}, the parameter $L>0$ is determined such that the transformed interpolation points are distributed alike to the asset prices. Theorem \ref{thm: small beta} implies that the scale parameter $L$ should not be too small, and Proposition \ref{prop:bubble} implies that $L$ should not be too large.
\begin{figure}[H]
    \centering
    \begin{subfigure}[b]{0.32\textwidth}
        \centering
        \includegraphics[width=\textwidth]{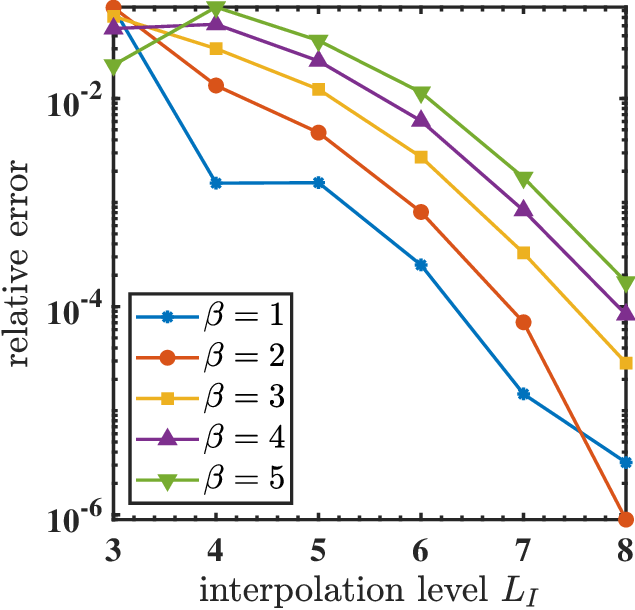}
        \caption{}
        \label{fig: fig_stable_beta}
    \end{subfigure}
    ~
    \begin{subfigure}[b]{0.32\textwidth}
        \centering
        \includegraphics[width=\textwidth]{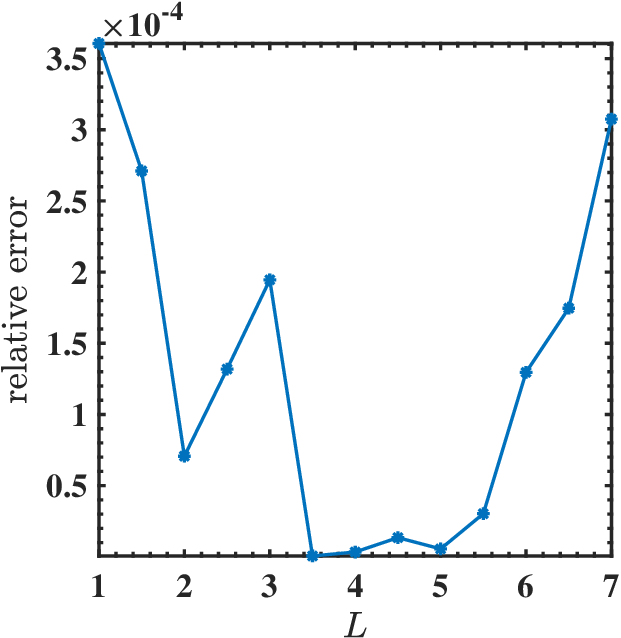}
        \caption{}
        \label{fig: fig_stable_L}
    \end{subfigure}
    ~
    \begin{subfigure}[b]{0.32\textwidth}
        \centering
        \includegraphics[width=\textwidth]{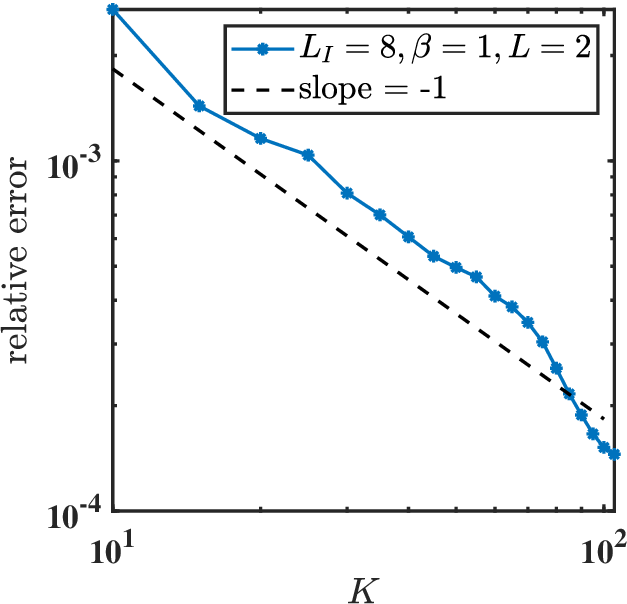}
        \caption{}
        \label{fig: fig_stable_K}
    \end{subfigure}
    \caption{(a) The relative errors decay for various $\beta$ for pricing the Bermudan geometric basket put option in dimension $2$ with $L=2$ and $K=50$. (b) The relative errors w.r.t. the transformation parameter $L$ for pricing the Bermudan geometric basket put option in dimension $2$ with $L_I = 7$, $\beta = 2$, and $K=50$. (c) The convergence of Bermudan prices to American price w.r.t. the number of time steps $K$ for pricing the 2-d Bermudan geometric basket put option with $L_I = 8$, $\beta = 1$, and $L=2$.}
\end{figure}
The time discretization always arises when using the price of $K$-times exercisable Bermudan option to approximate the American price. For the equidistant time step $\Delta t = T/K$, it is widely accepted that the Bermudan price approaches the American price as $\Delta t\to 0$ with a convergence rate $\mathcal{O} (\Delta t)$. For one single underlying asset, this convergence rate was shown in \cite{howison2007matched} for the Black Scholes model. A similar convergence rate has been observed in \cite{quecke2007efficient} and \cite{fang2009pricing} for L\'{e}vy models. In almost all pricing schemes based upon the dynamic programming, a more accurate price can be obtained with more exercise dates (but at a higher computational cost). One general
approach to alleviate the cost but still guarantee the accuracy is to apply the Richardson extrapolation \cite{geske1984american,chang2007richardson}. Fig. \ref{fig: fig_stable_K} present the convergence of Bermudan price to American price as $K$ increases using Algorithm \ref{alg: SGSG}.

\section{Conclusions}\label{sec:conclusion}
In this work, we have developed a novel quadrature and sparse grid interpolation based algorithm for pricing American options with many underlying assets. Unlike most existing methods, it does not involve introducing artificial boundary data by avoiding truncating the computational domain, and that a significant reduction of the number of grid points by introducing a bubble function. The resulting multivariate function has been shown to have bounded mixed derivatives. Numerical experiments for American basket put options with the number of underlying assets up to 16 demonstrate excellent accuracy of the approach. Future work includes pricing max-call options for multiple underlying assets, which are benchmark test cases for high-dimensional American options. Max-call options pose computational challenges due to their unboundedness and thus require certain special treatment.

\bibliographystyle{siam}
\bibliography{references}
\end{document}